\def\thmsection{section}
\def\thmchangesection{changesection}
\def\thmchangechapter{changechapter}
\def\thmchange{change}
\def\thmplain{plain}
  \theoremstyle{break-italic}
  \newtheorem{satz}{Satz}
    \theoremstyle{break-italic}
    \newtheorem{satz}{Satz}[section]
      \theoremstyle{break-italic}
      \newtheorem{satz}{Satz}
         \theoremstyle{break-italic}
         \newtheorem{satz}{Satz}[section]
           \theoremstyle{break-italic}
           \newtheorem{satz}{Satz}[chapter]
             \theoremstyle{break-italic}
             \newtheorem{satz}{Satz}[section]
            \theoremstyle{break-italic}
            \newtheorem{satz}{Satz}[section]
\theoremstyle{break-italic}
\newtheorem{theorem}[satz]{Theorem}
\newtheorem{lemma}[satz]{Lemma}
\newtheorem{corollary}[satz]{Corollary}
\newtheorem{Proposition}[satz]{Proposition}
\newtheorem*{conjecture*}{Conjecture}
\theoremstyle{break-roman}
\newtheorem{definition}[satz]{Definition}
\newtheorem{example}[satz]{Example}
\newtheorem{remark}[satz]{Remark}
\theoremstyle{standard}
\newtheorem*{claim}{Claim}
\theoremstyle{varthm-roman}
\newtheorem*{varthm-roman}{}% all text supplied in the note
\theoremstyle{varthm-italic}
\newtheorem*{varthm-italic}{}% all text supplied in the note
\theoremstyle{varthm-roman-break}
\newtheorem*{varthm-roman-break}{}% all text supplied in the note
\theoremstyle{varthm-italic-break}
\newtheorem*{varthm-italic-break}{}% all text supplied in the note
\theoremstyle{varthm-roman-no-punctuation}
\newtheorem{varthm-roman-no-punctuation-numbered}[satz]{}% all text supplied in the note
\theoremstyle{varthm-italic-no-punctuation}
\newtheorem{varthm-italic-no-punctuation-numbered}[satz]{}% all text supplied in the note
\newenvironment{varthm-roman-numbered}[1]{
  \begin{varthm-roman-no-punctuation-numbered}
    \mbox{\rm\textbf{#1}}
  }{\end{varthm-roman-no-punctuation-numbered}}
\newenvironment{varthm-italic-numbered}[1]{
  \begin{varthm-italic-no-punctuation-numbered}
    \mbox{\rm\textbf{#1}}
  }{\end{varthm-italic-no-punctuation-numbered}}
\newenvironment{varthm-roman-break-numbered}[1]{
  \begin{varthm-roman-no-punctuation-numbered}
    \mbox{\rm\textbf{#1}\newline}
  }{\end{varthm-roman-no-punctuation-numbered}}
\newenvironment{varthm-italic-break-numbered}[1]{
  \begin{varthm-italic-no-punctuation-numbered}
    \mbox{\rm\textbf{#1}}\newline
  }{\end{varthm-italic-no-punctuation-numbered}}
\numberwithin{equation}{section}
\def\ex{\begin{example}
  }
  \def\eex{\end{example}}
\def\thr{\begin{theorem}}
\def\ethr{\end{theorem}}
\def\pro{\begin{Proposition}}
\def\epro{\end{Proposition}}
\def\coro{\begin{corollary}}
\def\ecoro{\end{corollary}}
\def\df{\begin{definition}}
\def\edf{\end{definition}}
\def\lm{\begin{lemma}}
\def\elm{\end{lemma}}
\def\pf{\begin{proof}}
\def\epf{\end{proof}}
\def\problem{\begin{problem}}
\def\eproblem{\end{problem}}
\def\dlim{\displaystyle\lim}
\def\it{\begin{itemize}}
\def\hit{\end{itemize}}
\def\rem{\begin{remark}}
\def\erem{\end{remark}}
\def\cla{\begin{claim}}
\def\ecla{\end{claim}}
\def\dlim{\displaystyle\lim}
\newcommand{\seq}[1]{\left<#1\right>}
\begin{document}
\title[On directional second-order   tangent sets of  analytic sets]{On directional second-order tangent sets of analytic sets and applications in optimization}
\author{Le Cong Trinh}

\date{\today}
\begin{abstract}
In this paper we study directional second-order tangent sets of real and complex analytic sets. For an analytic set $X\subseteq \mathbb K^n$ and a nonzero tangent direction $u\in T_0X$, we compare the geometric directional second-order tangent set $T^2_{0,u}X$, defined through second-order expansions of analytic curves in $X$, with the algebraic directional second-order tangent set $T^{2,a}_{0,u}X$, defined by the initial forms of the equations of $X$.

We first prove the general inclusion
$T^2_{0,u}X\subseteq T^{2,a}_{0,u}X$
and exhibit explicit real and complex analytic examples showing that this inclusion can be strict. These examples show that algebraically admissible second-order coefficients need not be geometrically realizable by analytic
curves in $X$.

To address this gap, we reformulate the equality \(T^2_{0,u}X=T^{2,a}_{0,u}X\) as a realizability problem: the two sets coincide whenever every algebraically admissible second-order coefficient is realized by an analytic curve in $X$ with prescribed first two terms. We establish this realizability property for several important classes of analytic sets, including smooth analytic germs, homogeneous analytic cones, hypersurfaces with nondegenerate tangent directions, and nondegenerate analytic complete intersections.

As an application, we derive second-order necessary and sufficient optimality conditions for $C^2$ optimization problems on closed sets. In the analytic setting, whenever the above equality holds, the geometric directional second-order tangent sets appearing in these conditions may be replaced by their algebraic counterparts, so that the second-order tests become explicitly computable from the defining equations of the feasible set.
\end{abstract}
\maketitle

\section{Introduction}

Second-order tangent constructions are fundamental tools in variational analysis, optimization, and nonsmooth geometry. They arise naturally in the study of second-order optimality conditions, quadratic growth, stability, and sensitivity analysis for constrained problems, especially when the feasible set is singular and cannot be treated by classical smooth manifold methods. General background on second-order tangent sets and their role in optimization may be found, for example, in the survey of Giorgi, Jiménez, and Novo \cite{GiJiNo10} and in the monograph of Khan, Tammer, and Z\u{a}linescu \cite{KTZ15}.

For a closed set $X\subseteq \mathbb K^n$, the first-order tangent cone $T_0X$ describes feasible first-order directions at the reference point. However, first-order information alone is not sufficient to capture second-order behavior, nor to formulate second-order conditions for local optimality. This motivates the study of directional second-order tangent sets, which encode second-order corrections along a fixed tangent direction $u\in T_0X$.

In the analytic setting, two notions are particularly natural. The geometric directional second-order tangent set $T^2_{0,u}X$ is defined through second-order expansions of analytic curves contained in $X$, and thus reflects
genuine geometric realizability. By contrast, the algebraic directional second-order tangent set $T^{2,a}_{0,u}X$ is defined by the initial forms of the local equations of $X$ and is therefore more explicitly computable. The main purpose of this paper is to clarify the relationship between these two sets.

At the level of first-order tangent cones, the situation is classical. In the complex analytic setting, geometric and algebraic tangent cones coincide by Whitney's theorem \cite{Wh65a, Wh65b}, whereas in the real analytic setting only inclusion holds in general. For directional second-order tangent sets, however, the relationship is more delicate. The geometric set is determined by actual second-order realizability, while the algebraic set is defined by affine conditions
involving the initial homogeneous parts of the defining equations and their next homogeneous terms. There is therefore no a priori reason for the two notions to coincide.

Our first contribution is to prove that for every analytic set $X\subseteq \mathbb K^n$ and every nonzero tangent direction $u\in T_0X$, one always has
\[
T^2_{0,u}X\subseteq T^{2,a}_{0,u}X.
\]
We then construct explicit real and complex analytic examples showing that this inclusion can be strict. These examples reveal that there may exist algebraically admissible second-order coefficients which cannot be realized by
analytic curves in $X$.

A key role in the paper is played by the analytic-curve characterization of the geometric directional second-order tangent set. Namely, a vector $w\in \mathbb K^n$ belongs to $T^2_{0,u}X$ if and only if there exist
$\varepsilon>0$ and an analytic curve $\gamma:(0,\varepsilon)\to X$ such that 
\[
\gamma(t)=tu+\frac12 t^2w+o(t^2)
\qquad (t\to 0).
\]
This characterization shows that $T^2_{0,u}X$ is precisely the set of geometrically realizable second-order coefficients along the direction $u$. Accordingly, the equality
\(
T^2_{0,u}X=T^{2,a}_{0,u}X
\)
is naturally reformulated as a realizability problem: the two sets coincide whenever every algebraically admissible second-order coefficient is realized by an analytic curve in $X$ with the prescribed first two terms.

Using this viewpoint, we establish the equality
\(
T^2_{0,u}X=T^{2,a}_{0,u}X
\)
for several important classes of analytic sets, including smooth analytic germs, homogeneous analytic cones, hypersurfaces with nondegenerate tangent directions, and nondegenerate analytic complete intersections. In these cases,
the geometric second-order information can be recovered from explicit affine algebraic conditions.

The final part of the paper is devoted to applications in optimization.  Building on the framework of Bonnans, Cominetti, and Shapiro~\cite{BCS99} and Rockafellar and Wets~\cite{RoWe97}, we first derive abstract second-order necessary and sufficient optimality conditions for $C^2$ functions on closed sets, stated in terms of the geometric directional second-order tangent sets. We then return to the analytic setting and show that, for the classes covered by our realizability results, these conditions can be checked algebraically by replacing $T^2_{0,u}X$ with $T^{2,a}_{0,u}X$. We also present examples illustrating both the computational advantage of this algebraic formulation and the necessity of distinguishing between geometric and algebraic second-order tangent sets when the two do not coincide.

The paper is organized as follows. Section \ref{sec:preliminaries} recalls the basic notions from real and complex analytic geometry used throughout the paper. Section \ref{sec:geo-alg-tangents} studies geometric and algebraic directional second-order tangent sets and proves the general inclusion $T^2_{0,u}X\subseteq T^{2,a}_{0,u}X$, together with examples showing that the inclusion may be strict. Section \ref{sec:equality} reformulates the equality problem in terms of second-order realizability and establishes it for several classes of analytic sets. Section \ref{sec:optimization} is devoted to applications to second-order optimality conditions.

\section{Preliminaries}\label{sec:preliminaries}\label{sec:preliminaries}

In this section we recall several basic notions from real and complex analytic geometry that will be used throughout the paper. Standard references include Krantz and Parks \cite{KrPa02} for real-analytic functions, Narasimhan \cite{Nara85} for analytic spaces and germs, Chirka \cite{Chirka89} for complex analytic sets, and Whitney \cite{Wh65a, Wh65b} for tangent cones in the complex analytic setting.

\begin{definition}[\cite{KrPa02,Chirka89}] \rm  
Let $U \subseteq \mathbb K^n$ be open. A function $f:U\to \mathbb K$ is called \emph{analytic} if, for every $a\in U$, there exists a neighborhood $V\subseteq U$ of $a$ such that $f$ is represented on $V$ by a convergent power series centered at $a$. In the case $\mathbb K=\mathbb R$, this means that $f$ is real-analytic; in the case $\mathbb K=\mathbb C$, this means that $f$ is holomorphic.

Denote $\mathcal O_{\mathbb K^n}(U)$ the ring of analytic functions $U \to \mathbb K$, where $U \subseteq \mathbb K^n$ is an open subset.
\end{definition}

\begin{definition}[\cite{Nara85,Chirka89}] \rm
A subset $X\subseteq U$ is called an \emph{analytic set} if, for every point $a\in U$, there exists a neighborhood $V\subseteq U$ of $a$ and finitely many analytic functions
\[
f_1,\dots,f_r \in \mathcal O_{\mathbb K^n}(V)
\]
such that
\[
X\cap V=\{x\in V:\ f_1(x)=\cdots=f_r(x)=0\}.
\]
Thus an analytic set is a subset locally defined as the common zero locus of finitely many analytic functions.
\end{definition} 

\begin{definition}[\cite{Nara85,Chirka89}]\rm 
An analytic set $X\subseteq \mathbb K^n$ is called a \emph{smooth analytic submanifold near $0\in X$} if there exists a neighborhood $U$ of $0$ such that $X\cap U$ is an analytic submanifold of $\mathbb K^n$. Equivalently, there exists an integer $d$ and a local analytic coordinate system near $0$ in which
\[
X\cap U \cong \mathbb K^d\times \{0\}
\]
as analytic sets. In particular, near a smooth point, an analytic set is locally analytically equivalent to a linear subspace. 
\end{definition}

\begin{definition}[\cite{Nara85,Chirka89}] \rm 
Two subsets $A,B\subseteq \mathbb K^n$ are said to have the same \emph{germ at $0\in \mathbb K^n$} if there exists a neighborhood $V$ of $0$ such that
\[
A\cap V = B\cap V.
\]
The equivalence class of $A$ under this relation is called the \emph{germ of $A$ at $0$} and is denoted by $(A,0)$. Likewise, two analytic functions defined near $0$ represent the same \emph{germ of analytic function at $0$} if they coincide on some neighborhood of $0$. The ring of germs of analytic functions at $0$ on $\mathbb K^n$ is denoted by $\mathcal O_{\mathbb K^n,0}$.
\end{definition}

\begin{definition}[\cite{Nara85,Chirka89}]\rm 
Let $X\subseteq \mathbb K^n$ be an analytic set with $0\in X$. The \emph{ideal of germs vanishing on $X$ at $0$} is defined by
\[
\mathcal{I}(X,0):=\{\,f\in \mathcal O_{\mathbb K^n,0} : f\vert_{X\cap V}=0
\text{ for some neighborhood }V\text{ of }0\,\}.
\]
Equivalently, $\mathcal{I}(X,0)$ consists of all germs of analytic functions at $0$ that vanish on the germ $(X,0)$.
\end{definition}

\begin{definition}[\cite{GLS07}]\rm An analytic set $X$ in $ \mathbb{K}^n$ is called an \textit{analytic complete intersection at $0$} if $\mathcal{I}(X,0)$ can be generated by exactly $p = \operatorname{codim}_0 X = n-\dim_0 X$ analytic function germs
\[ \mathcal{I}(X,0) = \seq{f_1, \dots, f_p}, \]
and near $0$,
\[ X = \{x \in \mathbb{K}^n : f_1(x) = \cdots = f_p(x) = 0\}. \]
Equivalently, the minimal number of generators of the ideal equals the codimension.
\end{definition}

\begin{definition}[\cite{Nara85,Chirka89}]\rm 
Let $f\in \mathcal O_{\mathbb K^n,0}$, $f\neq 0$. Then $f$ admits a convergent power-series expansion
\[
f = f_m + f_{m+1}+f_{m+2}+\cdots,
\]
where each $f_j$ is a homogeneous polynomial of degree $j$, and $f_m\not\equiv 0$ for some smallest integer $m\ge 0$. The homogeneous polynomial $f_m$ is called the \emph{initial form} of $f$ at $0$ and is denoted by $f^{[\ast]}$. The integer
\[
\operatorname{ord}_0(f):=m
\]
is called the \textit{order of $f$ at $0$}.
\end{definition}

\begin{definition}[\cite{Wh65a, Wh65b, Chirka89}]\rm  Let $X\subseteq \mathbb K^n$ be an analytic set with $0\in X$.
The \textit{geometric tangent cone} $T_0 X$ of $X$ at $0$ is defined by the set of all $v \in \mathbb K^n$ such that there exists a sequence $\{x_k\} \subseteq X \setminus \{0\}$ which converges to $0$ and a sequence of numbers $\{t_k\} \subseteq \mathbb K$ for which the sequence $t_k x_k$ converges to $v$.
Note that, it will not alter the definition if we require that the $t_k$ real and positive \cite[Remark 8.2]{Wh65b}. Hence, 
\[
T_0 X = \{v \in \mathbb K^n \vert \exists \{x_k\} \subseteq X \setminus \{0\}, \{t_k\} \subseteq \mathbb R, t_k>0 \text{ s.t. } x_k \to 0 \text{ and } t_k x_k \to v\}.
\]

\end{definition}

\begin{definition}[\cite{Wh65a, Wh65b, Chirka89}] \rm
Let $X\subseteq \mathbb K^n$ be an analytic set with $0\in X$. The \emph{algebraic tangent cone} of $X$ at $0$, denoted by $C_0(X)$, is the algebraic set defined by the initial forms of all germs in $\mathcal{I}(X,0)$, namely
\[
C_0(X)
:=
\Bigl\{u\in \mathbb K^n:\ f^{[\ast]}(u)=0\quad \forall f\in \mathcal{I}(X,0)\Bigr\}.
\]
Equivalently, if $\operatorname{In}(\mathcal{I}(X,0))$ denotes the ideal generated by the initial forms $f^{[\ast]}$, $f\in \mathcal{I}(X,0)$, then
\[
C_0(X)=V\bigl(\operatorname{In}(\mathcal{I}(X,0))\bigr).
\]
Thus $C_0(X)$ is a homogeneous algebraic set, hence a cone. 
\end{definition}
\begin{remark} \rm 
  Let $X\subseteq \mathbb K^n$ be an analytic set with $0\in X$. It was shown in \cite{Wh65b} (see also in \cite{Chirka89}, \cite{GH}, \cite{LeLu20}, \cite{OW04}) that the geometric tangent cone  $T_0 X $ is always \textit{contained in} the  algebraic tangent cone $C_0 X $ of $X$ at $0$. Moreover, in the case $\mathbb K= \mathbb C$, Whitney showed in \cite{Wh65b} that $T_0 X = C_0 X$. However, in the case where $\mathbb K=\mathbb R$,  $T_0 X $ is in general   different from $C_0 X$ (cf. \cite{OW04}, \cite{LeLu20}). 
\end{remark}

\section{Directional second-order  tangent sets of analytic sets }\label{sec:geo-alg-tangents}
In the following we use $\mathbb K $ to  denote for the field of real numbers $\mathbb R$ or the field of complex numbers $\mathbb C$. 

\subsection{Geometric directional second-order tangent sets}
 
Let $X\subseteq \mathbb K^n$, $p\in X$, and $u\in \mathbb K^n$. 

  \df \label{def:2.3} \rm The \textit{geometric second-order tangent set} $T^2_{p,u} X$ of $X$ \textit{at $p \in \mathbb K^n$} \textit{in the direction} $u\in \mathbb K^n$ is defined by the set of all $w\in \mathbb K^n$ for which there are a sequence of positive real numbers $\{t_k\}$ with $t_k \downarrow 0$ and a sequence $\{w_k\}$ in $\mathbb K^n$ with $w_k \rightarrow w$ such that $p+t_ku+\dfrac{1}{2}t_k^2w_k \in X$ for every $k\in \mathbb N$.
\edf 

There are various types of second-order tangent sets of subsets in $\mathbb K^n$ \cite[Section 4.9]{KTZ15}.  Some equivalent definitions for  $T^2_{p,u} X$ can be found in \cite[Theorem 4.9.4]{KTZ15}. 

In the following we select some properties of the set $T^2_{p,u}X$. 

\begin{Proposition}[{\cite[Theorem 4.9.5]{KTZ15}}] \label{propertyT2} Let $X, X_1, X_2$ be subsets of $\mathbb K^n$; $u \in \mathbb K^n$, and $p \in X \cap X_1 \cap X_2$. Then 
\begin{itemize}
\item[(1)] $T^2_{p,0} X = T_p X$;
\item[(2)] $0\in T^2_{p,u}X $ $\Rightarrow $ $u\in T_pX$;
\item[(3)] $T^2_{p,u}X$ is closed;
\item[(4)] $T^2_{p,u}X \not = \emptyset $ $\Rightarrow $ $u\in T_p X$, hence, $u\not \in T_pX$ $\Rightarrow $ $T^2_{p,u}X = \emptyset$;
\item[(5)] $X_1\subseteq X_2$ $\Rightarrow$ $T^2_{p,u}X_1 \subseteq T^2_{p,u}X_2$;
\item[(6)] $T^2_{p,u} (X_1\cup X_2) = T^2_{p,u}X_1 \cup T^2_{p,u}X_2$.
\end{itemize} 
\end{Proposition}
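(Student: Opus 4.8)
The plan is to verify each of the six properties directly from the defining condition, namely that $w\in T^2_{p,u}X$ precisely when there exist $t_k\downarrow 0$ and $w_k\to w$ with $p+t_ku+\tfrac{1}{2}t_k^2w_k\in X$ for all $k$. Four of the six items follow immediately by algebraically manipulating this condition, while the remaining two require a limiting (diagonal or subsequence) argument. I expect the bulk of the genuine content to sit in items (3) and (6).

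For item (1), I would set $u=0$ and reparametrise by $s_k:=\tfrac{1}{2}t_k^2$, which still satisfies $s_k\downarrow 0$; the membership $p+s_kw_k\in X$ with $w_k\to w$ is then exactly the condition defining $T_pX$, giving equality in both directions. For items (2) and (4) the key observation is identical: whenever $p+t_ku+\tfrac{1}{2}t_k^2w_k\in X$ with $w_k$ convergent, then
\[
\frac{(p+t_ku+\tfrac{1}{2}t_k^2w_k)-p}{t_k}=u+\tfrac{1}{2}t_kw_k\longrightarrow u,
\]
so the mere existence of a witnessing sequence forces $u\in T_pX$. This proves (4), whose contrapositive $u\notin T_pX\Rightarrow T^2_{p,u}X=\emptyset$ is then automatic, and specialising $w=0$ yields (2). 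Item (5) is pure monotonicity: a witness $(t_k,w_k)$ for $w\in T^2_{p,u}X_1$ keeps the points $p+t_ku+\tfrac12 t_k^2 w_k$ inside $X_1\subseteq X_2$, so the same sequences witness $w\in T^2_{p,u}X_2$.

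The two items I expect to cost real work are (3) and (6). For closedness (3), I would take $w^{(j)}\in T^2_{p,u}X$ with $w^{(j)}\to w$ and, for each $j$, choose a witnessing pair $(t^{(j)}_k,w^{(j)}_k)_k$; a diagonal extraction then selects indices $k(j)$ with $t^{(j)}_{k(j)}<1/j$ and $|w^{(j)}_{k(j)}-w^{(j)}|<1/j$, whence $t^{(j)}_{k(j)}\downarrow 0$ and $w^{(j)}_{k(j)}\to w$ while $p+t^{(j)}_{k(j)}u+\tfrac12 (t^{(j)}_{k(j)})^2 w^{(j)}_{k(j)}\in X$ throughout. The only care needed is arranging the two limits simultaneously, which the diagonal choice handles. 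For (6), the inclusion $\supseteq$ is immediate from (5) applied to $X_1,X_2\subseteq X_1\cup X_2$; the substantive direction is $\subseteq$. Given a witness for $w\in T^2_{p,u}(X_1\cup X_2)$, each point $p+t_ku+\tfrac12 t_k^2w_k$ lies in $X_1$ or in $X_2$, so by the pigeonhole principle one of the two sets contains infinitely many of them, and passing to that subsequence produces a witness for $w$ relative to $X_1$ or to $X_2$. This pigeonhole/subsequence step is the only genuinely non-formal point.

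One subtlety worth flagging, since the paper works over $\mathbb K=\mathbb R$ or $\mathbb C$, is the meaning of $t_k\downarrow 0$ in the complex setting. I would fix a single convention throughout (for instance $t_k$ real and positive), observing that the quotients and reparametrisations above depend only on $t_k\to 0$ and not on any order structure; with that convention the arguments for (1)--(6) go through verbatim. The overall strategy thus reduces the proposition to elementary sequence manipulations, with the diagonal argument in (3) and the pigeonhole argument in (6) being the only places requiring attention.
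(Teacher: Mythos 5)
The paper itself contains no proof of Proposition \ref{propertyT2}: it is quoted verbatim from \cite[Theorem 4.9.5]{KTZ15}, so the only benchmark is the standard textbook argument, and your direct sequential verification matches it. The reparametrisation $s_k=\tfrac12 t_k^2$ in (1), the computation $\bigl(p+t_ku+\tfrac12 t_k^2w_k-p\bigr)/t_k=u+\tfrac12 t_kw_k\to u$ in (2) and (4), the monotonicity in (5), the diagonal extraction in (3), and the pigeonhole subsequence in (6) are all sound; your convention fixing $t_k$ real and positive is a sensible repair of the paper's wording of $t_k\downarrow 0$ over $\mathbb C$. One cosmetic point: in (3) your diagonal sequence $t^{(j)}_{k(j)}<1/j$ tends to $0$ but need not be decreasing, so pass to a monotone subsequence to fit the definition literally.

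One step deserves more care than you give it. In (1), (2) and (4) you identify the sequential condition (there exist $t_k\downarrow 0$ and $u_k\to u$ with $p+t_ku_k\in X$) with membership in $T_pX$, calling it ``exactly the condition defining $T_pX$''. That sequential condition characterises the contingent (Bouligand) cone, i.e.\ the set-theoretic $\limsup$, which is the convention of \cite{KTZ15} and the one under which the quoted theorem is true. The paper, however, defines $T_pX:=\liminf_{t\to 0}\frac{X-p}{t}$, equivalently $\lim_{t\to 0}\dist(p+tu,X)/t=0$, which is the generally smaller adjacent cone. Under that literal definition your argument only places $u$ in the contingent cone, and items (1), (2), (4) actually fail: for $X=\{0\}\cup\{2^{-k}:k\in\mathbb N\}\subseteq\mathbb R$, $p=0$, $u=1$, one has $0\in T^2_{0,1}X$ (take $t_k=2^{-k}$, $w_k=0$), yet $\dist(t,X)/t$ stays bounded away from $0$ along $t_j=2^{-j-1/2}$, so $u\notin T_0X$ in the $\liminf$ sense. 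Your proof is therefore correct for the statement as meant in \cite{KTZ15}, but you should state explicitly that you are using the sequential ($\limsup$) characterisation of $T_pX$; the discrepancy sits in the paper's own first-order definition, not in your sequence manipulations.
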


\textit{In the whole paper, because of the simplicity, we  consider directional second-order tangent sets of an analytic set $X$ at $0 \in X$. }

\begin{remark} \rm $T^2_{0,u}X$ is, in general, not a cone in $\mathbb K^n$. 
\end{remark}
In fact,  let 
\[
X:=\{(x,y)\in \mathbb K^2 : y-x^2=0\},
\qquad 0=(0,0)\in X,
\qquad u:=(1,0).
\]
This is an analytic (indeed algebraic) hypersurface in $\mathbb K^2$.

Let $w=(w_1,w_2)\in \mathbb K^2$.
By definition, $w\in T^2_{0,u}X$ if and only if there exist sequences
$t_k\downarrow 0$ and $w_k=(a_k,b_k)\to (w_1,w_2)$ such that
\[
x_k:=t_k u+\frac12 t_k^2 w_k
=
\left(t_k+\frac12 t_k^2 a_k,\ \frac12 t_k^2 b_k\right)
\in X
\quad\text{for all }k.
\]
Since $x_k\in X$, we must have
\[
\frac12 t_k^2 b_k
=
\left(t_k+\frac12 t_k^2 a_k\right)^2
=
t_k^2+t_k^3 a_k+\frac14 t_k^4 a_k^2.
\]
Dividing by $t_k^2$ and letting $k\to\infty$, we obtain
\[
\frac12 w_2 = 1,
\quad\text{hence}\quad
w_2=2.
\]
Conversely, for any $w_1\in\mathbb K$ with $w_2=2$, one easily constructs sequences
satisfying the definition, so
$$
T^2_{0,u}X=\{(w_1,w_2)\in\mathbb K^2 : w_2=2\}.
$$

This set is an affine hyperplane and is not invariant under scalar multiplication.
For example, $(0,2)\in T^2_{0,u}X$ but $(0,4)=2(0,2)\notin T^2_{0,u}X$.
Hence $T^2_{0,u}X$ is not a cone.

\begin{Proposition}[Analytic-arc characterization of $T^2_{0,u}X$] \label{prop:curve-charac}
Let $X\subseteq \mathbb K^n$  be an analytic set with $0\in X$, and let
$u\in \mathbb K^n\setminus\{0\}$. 
Then for $w\in \mathbb K^n$ the following are equivalent.
\begin{enumerate}
\item[(1)] $w\in T^2_{0,u}X$.
\item[(2)] There exist $\varepsilon>0$ and an analytic curve
$\gamma:(0,\varepsilon)\to X$ (real-analytic if $\mathbb K=\mathbb R$, holomorphic if $\mathbb K=\mathbb C$)
such that
\[
\gamma(t)=t\,u+\frac12 t^2 w+o(t^2)\qquad (t\to 0).
\]
Equivalently,
\[
\lim_{t\to 0}\frac{\gamma(t)-t\,u}{t^2/2}=w.
\]
\end{enumerate}
\end{Proposition}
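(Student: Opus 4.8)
The proof splits into the two implications, of which $(2)\Rightarrow(1)$ is immediate and $(1)\Rightarrow(2)$ carries all the content. For $(2)\Rightarrow(1)$ I would simply pick any sequence $t_k\downarrow 0$ inside the parameter interval and set $w_k:=2\bigl(\gamma(t_k)-t_k u\bigr)/t_k^2$; the expansion $\gamma(t)=tu+\tfrac12 t^2 w+o(t^2)$ forces $w_k\to w$, while $t_k u+\tfrac12 t_k^2 w_k=\gamma(t_k)\in X$, so $w\in T^2_{0,u}X$ directly from the definition.

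The substance is $(1)\Rightarrow(2)$, where the discrete sequential data must be upgraded to a genuine analytic arc; this is exactly where the analyticity of $X$ enters, through the curve selection lemma for (semi)analytic sets. The plan is to encode the second-order approximation in an auxiliary set. Writing $X$ near $0$ as the common zero locus of finitely many analytic functions $f_1,\dots,f_r$, I would introduce
\[
S:=\bigl\{(t,v)\in\mathbb{R}_{>0}\times\mathbb{K}^n : \ tu+\tfrac12 t^2 v\in X\bigr\},
\]
which is semianalytic near $(0,w)$ because the conditions $f_i\bigl(tu+\tfrac12 t^2 v\bigr)=0$ are analytic in $(t,v)$ (in the complex case one takes real and imaginary parts). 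Hypothesis $(1)$ provides $(t_k,w_k)\in S$ with $(t_k,w_k)\to(0,w)$, so $(0,w)\in\overline{S}$. The curve selection lemma then yields a real-analytic arc $\tau\mapsto\bigl(t(\tau),v(\tau)\bigr)$ on some $[0,\delta)$ with $(t(0),v(0))=(0,w)$ and $(t(\tau),v(\tau))\in S$ for $\tau>0$. Setting $\Gamma(\tau):=t(\tau)u+\tfrac12 t(\tau)^2 v(\tau)\in X$ produces an analytic arc in $X$ of the right quadratic shape, but parametrized by $\tau$ rather than by the coefficient of $u$.

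The main obstacle is precisely this reparametrization. Since $t(\tau)$ is analytic, positive for $\tau>0$, and vanishes at $0$, it has the form $t(\tau)=a\tau^{p}+\cdots$ with $a>0$ and some integer $p\ge 1$, and (shrinking $\delta$) $t$ is strictly increasing on $(0,\delta)$; I would then reparametrize by the arc's own $u$-scale, setting $s:=t(\tau)$ and $\tilde\gamma(s):=\Gamma(\tau(s))=s\,u+\tfrac12 s^2\,v(\tau(s))\in X$. As $v$ is continuous and $\tau(s)\to 0$ when $s\to 0^+$, one gets $v(\tau(s))\to w$, hence $\tilde\gamma(s)=s\,u+\tfrac12 s^2 w+o(s^2)$, which is the desired expansion with first jet exactly $u$. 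The delicate point is analyticity: when $p>1$ the inverse $\tau(s)$ is a convergent Puiseux series in $s$ and does not extend analytically to $s=0$, so $\tilde\gamma$ cannot be analytic at the endpoint. This is reconciled by the phrasing of the statement: on the open interval one has $t'(\tau)>0$, so by the analytic inverse function theorem $\tau(s)$, and therefore $\tilde\gamma$, is real-analytic on $(0,\varepsilon)$ even though it need not be at $0$. Thus $\tilde\gamma$ satisfies condition $(2)$ exactly as written, which explains why the domain is $(0,\varepsilon)$ and not $[0,\varepsilon)$.

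The complex case follows the same scheme. One may keep $t$ real and apply the real curve selection lemma to the realification of $S$, or work with the holomorphic curve selection lemma for complex analytic germs; in either case the reparametrizing map $s\mapsto\tau(s)$ is holomorphic on a slit disc, and its restriction to $(0,\varepsilon)$ provides the holomorphic arc demanded in $(2)$. I expect no difficulty here beyond bookkeeping once the real argument is in place, the genuinely load-bearing ingredients being the curve selection lemma and the open-interval reading of ``analytic arc''.
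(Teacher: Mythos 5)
Your proposal is correct and follows essentially the same route as the paper: the easy direction via $w_k:=2\bigl(\gamma(t_k)-t_ku\bigr)/t_k^2$, and the substantial direction via the same auxiliary set $\{(t,v): tu+\tfrac12 t^2v\in X\}$, the curve selection lemma at the limit point $(0,w)$, and reparametrization by $\tau:=t(s)$. Your explicit discussion of the Puiseux obstruction at $s=0$ and why analyticity survives only on the open interval $(0,\varepsilon)$ is a welcome elaboration of a point the paper handles by a bare citation to Milnor, but it does not change the argument.
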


\begin{proof} Assume there exists an analytic curve $\gamma: (0,\varepsilon)\to X$ such that
\[
\gamma(t)=t\,u+\frac12 t^2 w+o(t^2)\quad (t\to 0).
\]
Choose any sequence $t_k\downarrow 0$ and define
\[
w_k:=\frac{2}{t_k^2}\bigl(\gamma(t_k)-t_k u\bigr).
\]
Then $w_k\to w$ and
\[
t_k u+\frac12 t_k^2 w_k=\gamma(t_k)\in X,
\]
which shows $w\in T^2_{0,u}X$ by definition.

\medskip

Conversely, assume $w\in T^2_{0,u}X$. Then there exist sequences $t_k\downarrow 0$ and $w_k\to w$ such that
\begin{equation}\label{eq:sequence}
t_k u+\frac12 t_k^2 w_k\in X\qquad \forall k.
\end{equation}
Consider the set
\[
A:=\Bigl\{(t,v)\in \mathbb  K\times \mathbb K^n:\ t\neq 0,\ \ t u+\tfrac12 t^2 v\in X\Bigr\}.
\]
Since $X$ is analytic and the mapping
\[
(t,v)\longmapsto t u+\tfrac12 t^2 v
\]
is analytic, the set $A$ is subanalytic (respectively complex-analytic when $\mathbb K=\mathbb C$).
By \eqref{eq:sequence}, $(t_k,w_k)\in A$ and $(t_k,w_k)\to (0,w)$, hence $(0,w)\in\overline{A}$.

By the Curve Selection Lemma for subanalytic (respectively analytic) sets
\cite[Th\'eor\`eme~1]{Lo95},
\cite[\S3]{Mil68},
there exist $\varepsilon>0$ and an analytic mapping
\[
\sigma:(0,\varepsilon)\to A,\qquad \sigma(s)=(t(s),v(s)),
\]
such that
\[
\lim_{s\to 0}(t(s),v(s))=(0,w).
\]
In particular, $t(s)\neq 0$ for all $s\in(0,\varepsilon)$ and $v(s)\to w$ as $s\to 0$.
Define an analytic curve
\[
\tilde\gamma(s):=t(s)\,u+\frac12 t(s)^2 v(s)\in X.
\]

Since $t(s)$ is a nonconstant analytic function with $t(s)\to 0$, there exist
$m\in\mathbb N$ and $a\neq 0$ such that
\[
t(s)=a s^m+o(s^m)\qquad (s\to 0).
\]
After possibly restricting $(0,\varepsilon)$, we may reparametrize by $\tau:=t(s)$
(cf.\ \cite[p.~27]{Mil68}). Writing $s=s(\tau)$ and setting
\[
\gamma(\tau):=\tilde\gamma(s(\tau)),
\]
we obtain an analytic curve $\gamma:(0,\varepsilon')\to X$ satisfying
\[
\gamma(\tau)=\tau u+\frac12 \tau^2\,v(s(\tau)).
\]
Since $v(s)\to w$, we conclude
\[
\gamma(\tau)=\tau u+\frac12 \tau^2 w+o(\tau^2)\qquad (\tau\to 0),
\]
which proves \textup{(2)}.
\end{proof}

\begin{remark}\label{rem:importance-prop-2.6}\rm
Proposition \ref{prop:curve-charac} is fundamental for the rest of the paper. Indeed, the definition
of $T^2_{0,u}X$ in Definition \ref{def:2.3} is given in terms of approximating sequences,
whereas Proposition \ref{prop:curve-charac} shows that membership in $T^2_{0,u}X$ is equivalent to
the existence of an analytic arc in $X$ having prescribed first- and second-order
terms. In this way, the proposition gives a geometric interpretation of the
directional second-order tangent set: it is precisely the set of second-order
coefficients that are realizable by analytic arcs tangent to $u$. 

This characterization plays a decisive role in Section~ \ref{sec:equality}. There, the comparison
between the geometric set $T^2_{0,u}X$ and the algebraic set $T^{2,a}_{0,u}X$
is reformulated as a realizability problem.
\end{remark}

\subsection{Algebraic directional second-order tangent sets of analytic sets}
In this section we study the second-order tangent set  of an analytic set $X$ in some open set $U\subseteq \mathbb K^n$ at $0\in X$ in the direction $u\in \mathbb K^n$.

For an analytic function $f$ in a neighborhood $U$ of $0\in \mathbb K^n$, its \textit{gradient vector } at a point $x^0 \in U$  is defined by 
$$ \nabla f (x^0):= \big(\dfrac{\partial f }{\partial x_1}(x^0), \ldots, \dfrac{\partial f}{\partial x_n}(x^0)\big); $$
its \textit{Hessian matrix} at $x^0 \in U$ is defined by 
$$ \nabla^2 f (x^0):= \big(\dfrac{\partial^2 f }{\partial x_i \partial x_j}(x^0)\big)_{i,j=1,\cdots,n}. $$
More generally, let   $\nabla^{(p)}f(x^0)$ denote the $p$-th order derivative of $f$ at $x^0$, and for any $y\in \mathbb K^n$, let us denote  
$$
 \nabla^{(p)}f(x^0)[y]^p:=\nabla^{p}f(x^0)\underbrace{(y)\ldots (y)}_{ \mbox{$p$ times}},
$$
where $[y]^p$ denote the $p$-times outer product of the vector  $y\in \mathbb K^n$, and $\nabla^{(p)}f(x^0)[y]^p$ the outer product of two arrays $\nabla^{(p)}f(x^0)$ and $[y]^p$. 
For example, 
$$\nabla^{(1)}f(x^0)[y]^1 = \seq{\nabla f(x^0),y}, \quad \nabla^{(2)}f(x^0)[y]^2 = \seq{\nabla^2 f(x^0)y, y}. $$
Then for $h\in \mathbb K^n$ such that $x^0+h \in U$, the Taylor's expansion of $f$ at $x^0$ can be written as
\begin{equation}\label{Taylor}
 f(x^0+h)=\sum_{i=0}^\infty \dfrac{1}{i!}\nabla^{(i)}f(x^0)[h]^i.
\end{equation}

\df \label{def:algebraic-tangent-set}\rm Let $X\subseteq \mathbb K^n$ be an analytic set with  $0 \in X$. The \textit{algebraic second-order tangent set} of $X$ at $0$ \textit{in the direction} $u\in \mathbb K^n$ is defined by 
$$T^{2,a}_{0,u} X:=\Big\{w\in \mathbb K^n \mid \dfrac{1}{2}\seq{\nabla f^{[*]}(u),w} + f^{[*]+1}(u) = 0, \forall f \in \mathcal{I}(X,0)\Big\}.$$ 
Note that, if  $\nabla f^{[*]}(u) \not = 0$, $T^{2,a}_{0,u} X$ is a  hyperplane  in $\mathbb K^n$. 
\edf 

\begin{remark} \rm $T^{2,a}_{0,u}X$ is, in general, not a cone in $\mathbb K^n$. 
\end{remark}
In fact,  let 
\[
X:=\{(x,y)\in \mathbb K^2 : y-x^2=0\},
\qquad 0=(0,0)\in X,
\qquad u:=(1,0).
\]
The ideal $\mathcal{I}(X,0)$ is generated by
\(
f(x,y)=y-x^2.
\)
Its lowest degree homogeneous part is
\(
f^{[*]}(x,y)=y,
\)
and the next homogeneous part is
\(
f^{[*]+1}(x,y)=-x^2.
\)
We compute
\[
\nabla f^{[*]}(x,y)=(0,1),
\qquad
\nabla f^{[*]}(u)=(0,1),
\qquad
f^{[*]+1}(u)=-1.
\]
By definition,
\[
T^{2,a}_{0,u}X
=
\left\{
w\in\mathbb K^2 :
\frac12\langle \nabla f^{[*]}(u),w\rangle + f^{[*]+1}(u)=0
\right\}.
\]
Since $\langle (0,1),(w_1,w_2)\rangle=w_2$, this condition becomes
\[
\frac12 w_2 -1=0,
\quad\text{that is,}\quad
w_2=2.
\]
Therefore
\[
T^{2,a}_{0,u}X=\{(w_1,w_2)\in\mathbb K^2 : w_2=2\}.
\]

As before, this set is not closed under scalar multiplication, hence it is not a cone.

In the following we give an inclusion between these two sets.

\begin{Proposition} \label{inclusion} Let $X$ be an analytic set in an open set $U\subseteq \mathbb K^n$; $0\in X$. Let $u \in \mathbb K^n\setminus \{0\}$. Then 
\begin{equation}\label{equ-inclusion} T^2_{0,u} X \subseteq T^{2,a}_{0,u} X.
\end{equation}
\end{Proposition} 
\begin{proof} We may assume  that $ T^2_{0,u} X \not = \emptyset$, and take any $w \in  T^2_{0,u} X$. Then by the characterization of $T^2_{0,u} X $, there exist  $\{t_k\}\rightarrow 0,$ $  \{x_k\}\subseteq X$ such that $$\dlim_{k\rightarrow \infty } \dfrac{x_k-t_ku}{\frac{1}{2}t_k^2} = w. $$
Denote  $w_k:=\dfrac{x_k-t_ku}{\frac{1}{2}t_k^2}$ which converges to $w$ as $k\rightarrow \infty$. Then 
$$x_k = t_ku + \frac{1}{2}t_k^2w_k.$$ 
Let take any $f\in \mathcal{I}(X,0)$. Denote $m$ be the order of $f$. Then we can write 
$$ f = \sum_{i=m}^\infty f_i, $$
where $f_i$ is the homogeneous component of $f$ whose degree equals to $i$. Then for each $i=m, m+1,\ldots,$ it follows from the Taylor's expansion (\ref{Taylor}) and the homogeneous property of $f_i$  that  
$$f_i(x_k)=t_k^i f_i(u) + \dfrac{1}{2} t_k^{i+1} \seq{\nabla f_i(u),w_k} + \dfrac{1}{8}t_k^{i+2}\seq{\nabla^2 f_i(u)w_k, w_k} + o(t_k^{i+2}).$$
Now we have 
\begin{align}\label{Taylor-f}
f(x_k) & = t_k^m f_m(u) + \dfrac{1}{2} t_k^{m+1} \seq{\nabla f_m(u),w_k} + \dfrac{1}{8}t_k^{m+2}\seq{\nabla^2 f_m(u)w_k, w_k} + \\
& + t_k^{m+1} f_{m+1}(u) + \dfrac{1}{2} t_k^{m+2} \seq{\nabla f_{m+1}(u),w_k} + \dfrac{1}{8}t_k^{m+3}\seq{\nabla^2 f_{m+1}(u)w_k, w_k} + o(t_k^{m+3}).\nonumber
\end{align}
Since $x_k\in X$ we have $f(x_k)=0$. It follows that 
\begin{align*}
0 & =  f_m(u) + \dfrac{1}{2} t_k  \seq{\nabla f_m(u),w_k} + \dfrac{1}{8}t_k^{2}\seq{\nabla^2 f_m(u)w_k, w_k} + \\
& + t_k f_{m+1}(u) + \dfrac{1}{2} t_k^{2} \seq{\nabla f_{m+1}(u),w_k} + \dfrac{1}{8}t_k^{3}\seq{\nabla^2 f_{m+1}(u)w_k, w_k} + o(t_k^{3}).
\end{align*}
Letting $k $ tend to $\infty$, since $t_k \rightarrow 0$, we have $f_m(u)=0$ (i.e. $u\in C_0(X)$), and 
\begin{align*}
0 & =    \dfrac{1}{2} \seq{\nabla f_m(u),w_k} + \dfrac{1}{8}t_k \seq{\nabla^2 f_m(u)w_k, w_k} + \\
& +  f_{m+1}(u) + \dfrac{1}{2} t_k  \seq{\nabla f_{m+1}(u),w_k} + \dfrac{1}{8}t_k^{2}\seq{\nabla^2 f_{m+1}(u)w_k, w_k} + o(t_k^{2}).
\end{align*}
Letting $k$ tend to $\infty$ again, note that $t_k \rightarrow 0$ and $w_k \rightarrow w$, we obtain 
$$ \dfrac{1}{2} \seq{\nabla f_m(u),w}  + f_{m+1}(u) = 0,$$
i.e. $w \in T^{2,a}_{0,u} X$. 
\end{proof}

\subsection{Distinction of second-order tangent sets}\label{subsec:difference}

 In this subsection we present examples showing that the inclusion $T^{2}_{0,u}X \subseteq T^{2,a}_{0,u}X$ can be strict.

Firstly, we consider the case where $X$ is a \textbf{real analytic set}. For example, let us consider
$$
X=\bigl\{(x,y,z)\in\mathbb R^3:\ f(x,y,z)=z^{2}-x^{3}y^{3}=0\bigr\},\qquad 0\in X,
$$
and take \(u\in T_{0}X\setminus\{0\}\).

The tangent cone of $X$ at $0\in X$ is 
$$
T_0 X=\{(x_1,x_2,0) \mid x_1,x_2\in \mathbb R\}.
$$
Hence, we may take $u=(u_{1},u_{2},0)\not = (0,0,0)$.

Now we compute $T^{2,a}_{0,u}X$.   Since $X$ is a hypersurface germ defined by one analytic equation,
and $f = z^2 - x^3 y^3$ is square-free in the local analytic ring, the ideal $ \mathcal{I}(X, 0) $ of analytic germs vanishing on $(X, 0)$ is exactly the principal ideal generated by $f$:
$$\mathcal{I}(X, 0) = \left<f\right> = \{f \cdot g \vert g\in \mathcal{O}_{\mathbb R^3,0}\}.$$ Here
$$
f^{[*]}(x,y,z)=z^{2},\qquad f^{[*]+1}\equiv 0,\qquad \nabla(z^{2})=(0,0,2z).
$$
Since \(u_{3}=0\), we get \(\nabla(z^{2})(u)=(0,0,0)\). Therefore the second-order condition becomes
$$
\frac12\langle 0,w\rangle +0=0,
$$
which is always true. Hence for every \(u=(u_{1},u_{2},0)\neq (0,0,0)\),
$$
T^{2,a}_{0,u}X=\mathbb R^{3}.
$$

Next we compute $T^{2}_{0,u}X$. By definition, \(w\in T^{2}_{0,u}X\) if and only if there exist \(t_{k}\downarrow 0\) and \(w_{k}\to w\) such that
\[
x_{k}:=t_{k}u+\frac12 t_{k}^{2}w_{k}\in X.
\]
Write \(w_{k}=(a_{k},b_{k},c_{k})\). Then
\[
x_{k}=\Bigl(t_{k}u_{1}+\frac12 t_{k}^{2}a_{k},\ \ t_{k}u_{2}+\frac12 t_{k}^{2}b_{k},\ \ \frac12 t_{k}^{2}c_{k}\Bigr),
\]
and the constraint \(x_{k}\in X\) is
\begin{equation}\label{eq:star}
\Bigl(\frac12 t_{k}^{2}c_{k}\Bigr)^{2}
=
\Bigl(t_{k}u_{1}+\frac12 t_{k}^{2}a_{k}\Bigr)^{3}
\Bigl(t_{k}u_{2}+\frac12 t_{k}^{2}b_{k}\Bigr)^{3}.
\end{equation}
The left side of (\ref{eq:star}) satisfies
\[
\Bigl(\frac12 t_{k}^{2}c_{k}\Bigr)^{2}\sim \frac14 t_{k}^{4}c_{k}^{2},
\]
while the right side is at least of order \(t_{k}^{6}\) whenever \(u\neq 0\). Hence \(c_{k}\to 0\), and therefore
\[
w_{3}=0\qquad\text{for every } w\in T^{2}_{0,u}X.
\]
 
We consider four  cases.\\
{\textbf{Case 1:} \(u_{1}u_{2}>0\).}\\
Then the right-hand side of \eqref{eq:star} has leading term \(t_{k}^{6}(u_{1}u_{2})^{3}>0\), and one can satisfy
\eqref{eq:star} by choosing \(c_{k}\) of order \(t_{k}\) (so that \(z_{k}\) is order \(t_{k}^{3}\)), while still having \(c_{k}\to 0\).
The sequences \(a_{k},b_{k}\) can converge arbitrarily. Thus
\[
u_{1}u_{2}>0 \ \Longrightarrow\
T^{2}_{0,u}X=\{(w_{1},w_{2},0): w_{1},w_{2}\in\mathbb R\}.
\]
{\textbf{Case 2:} \(u_{1}u_{2}<0\).}\\
Then the leading coefficient on the right-hand side of \eqref{eq:star} is \((u_{1}u_{2})^{3}<0\), while the left-hand side is a square and hence \(\ge 0\).
For \(k\) large the right-hand side stays negative, so equality is impossible. Hence
\[
u_{1}u_{2}<0 \ \Longrightarrow\ T^{2}_{0,u}X=\varnothing.
\]
{\textbf{Case 3:} \(u_{1}=0,\ u_{2}\neq 0\).}\\
Then the right-hand side of \eqref{eq:star} is of order \(t_{k}^{9}\) if \(a_{k}\) tends to a nonzero limit (since \(x\) starts at order \(t_{k}^{2}\)).
To match the left-hand side, one needs \(c_{k}\) of order \(t_{k}^{5/2}\), which is possible only if the relevant coefficient is nonnegative; this forces
\(w_{1}u_{2}\ge 0\) (and still \(w_{3}=0\)). Thus
\[
u_{1}=0,\ u_{2}\neq 0 \ \Longrightarrow\
T^{2}_{0,u}X=\{(w_{1},w_{2},0): w_{1}u_{2}\ge 0\}.
\]
{\textbf{Case 4: }\(u_{2}=0,\ u_{1}\neq 0\).}\\
By symmetry,
\[
u_{2}=0,\ u_{1}\neq 0 \ \Longrightarrow\
T^{2}_{0,u}X=\{(w_{1},w_{2},0): w_{2}u_{1}\ge 0\}.
\]

In summary, for every \(u\in T_{0}X\setminus\{0\}\), $T^{2,a}_{0,u}X=\mathbb R^{3},$
whereas \(T^{2}_{0,u}X\) is
\[
\begin{cases}
\{(w_{1},w_{2},0): w_{1},w_{2}\in\mathbb R\}, & \text{if } u_{1}u_{2}>0,\\[2mm]
\varnothing, & \text{if } u_{1}u_{2}<0,\\[2mm]
\{(w_{1},w_{2},0): w_{1}u_{2}\ge 0\}, & \text{if } u_{1}=0,\ u_{2}\neq 0,\\[2mm]
\{(w_{1},w_{2},0): w_{2}u_{1}\ge 0\}, & \text{if } u_{2}=0,\ u_{1}\neq 0.
\end{cases}
\]
Hence we have in the real case
\[
T^{2}_{0,u}X \subsetneq T^{2,a}_{0,u}X.
\]

Now we consider the case where $X$ is a \textbf{complex analytic set}. For example, let us consider
\[
X=\bigl\{(x,y,z)\in\mathbb C^3:\ z^{2}-x^{3}y^{3}=0\bigr\}\subseteq\mathbb C^3,\qquad 0\in X.
\]
Consider \(u\in T_{0}X\setminus\{0\}\), where 
\[
T_{0}X=\{(x,y,0)\mid x,y\in \mathbb C\}.
\]
Hence \(u=(u_{1},u_{2},0)\in\mathbb C^{3}\setminus\{0\}\).

Similar to the real case, we can compute
\[
T^{2,a}_{0,u}X=\mathbb C^{3}\qquad \text{for all }u\in T_{0}X\setminus\{0\}.
\]

Now we compute \(T^{2}_{0,u}X\). Take \(w=(w_{1},w_{2},w_{3})\in\mathbb C^{3}\). We ask whether there exist \(t_{k}\to 0\) in \(\mathbb C\) and
\(w_{k}\to w\) such that
\[
x_{k}:=t_{k}u+\frac12 t_{k}^{2}w_{k}\in X.
\]
Writing \(w_{k}=(a_{k},b_{k},c_{k})\), we have
\[
x_{k}=\Bigl(t_{k}u_{1}+\frac12 t_{k}^{2}a_{k},\ \ t_{k}u_{2}+\frac12 t_{k}^{2}b_{k},\ \ \frac12 t_{k}^{2}c_{k}\Bigr),
\]
and the equation \(x_{k}\in X\) becomes
\begin{equation}\label{eq:starC}
\Bigl(\frac12 t_{k}^{2}c_{k}\Bigr)^{2}
=
\Bigl(t_{k}u_{1}+\frac12 t_{k}^{2}a_{k}\Bigr)^{3}
\Bigl(t_{k}u_{2}+\frac12 t_{k}^{2}b_{k}\Bigr)^{3}.
\end{equation}
Similar to the real case, we have 
\[
w_{3}=0\qquad\text{for all } w\in T^{2}_{0,u}X.
\]
Then the right-hand side of \eqref{eq:starC} has leading term
\[
t_{k}^{6}(u_{1}u_{2})^{3}\in\mathbb C,
\]
and we can always choose
\[
c_{k}=2t_{k}\sqrt{(u_{1}u_{2})^{3}}\,(1+o(1)),
\]
(using any branch of the complex square root) so that \eqref{eq:starC} holds.
There is no restriction on the signs of \(u_{1},u_{2}\), nor on the values of \(w_{1},w_{2}\). Thus, for every \(u\neq 0\) with \(u_{3}=0\),
\[
T^{2}_{0,u}X=\{(w_{1},w_{2},0):\ w_{1},w_{2}\in\mathbb C\}.
\]

Therefore, for all \(u=(u_{1},u_{2},0)\neq 0\),
\[
T^{2,a}_{0,u}X=\mathbb C^{3},
\qquad
T^{2}_{0,u}X=\mathbb C^{2}\times\{0\}.
\]
Hence, in the complex case, $
T^{2}_{0,u}X \subsetneq T^{2,a}_{0,u}X.$

\section{Equality of geometric and algebraic directional second-order tangent sets under realizability conditions} \label{sec:equality}
\subsection{A realizability condition for second-order coefficients}
The examples in subsection \ref{subsec:difference} show that the inclusion
\(
T^2_{0,u}X \subseteq T^{2,a}_{0,u}X
\)
may be strict. To understand when equality holds, it is important to distinguish
between the geometric and algebraic roles of second-order coefficients.

By Proposition \ref{prop:curve-charac}, a vector $w\in \mathbb K^n$ belongs to
$T^2_{0,u}X$ if and only if there exist $\varepsilon>0$ and an analytic curve
\(
\gamma:(0,\varepsilon)\to X
\)
such that
\[
\gamma(t)=tu+\frac12 t^2w+o(t^2)
\qquad (t\to 0).
\]
Thus $T^2_{0,u}X$ is precisely the set of second-order coefficients that are
\emph{geometrically realizable} by analytic arcs in $X$ with prescribed
first-order direction $u$.

On the other hand, $T^{2,a}_{0,u}X$ is defined by the initial forms of the equations
of $X$, and therefore consists of the second-order coefficients that are
\emph{algebraically admissible}. In general, these algebraic conditions are not
determined by ordinary analytic arcs in the tangent cone $C_0X$, since such arcs
capture only the homogeneous part of the second-order constraints. Therefore,
the comparison between $T^2_{0,u}X$ and $T^{2,a}_{0,u}X$ should be formulated as
a comparison between geometric realizability and algebraic admissibility.

This leads naturally to the following notion: equality
\(
T^2_{0,u}X = T^{2,a}_{0,u}X
\)
will follow whenever every algebraically admissible second-order coefficient
$w\in T^{2,a}_{0,u}X$ is realized by an analytic curve in $X$ of the form
\[
\gamma(t)=tu+\frac12 t^2w+o(t^2).
\]

\begin{definition}\label{def:realizability}
Let $X\subseteq \mathbb K^n$ be an analytic set with $0\in X$, and let
$u\in T_0X\setminus\{0\}$. We say that $X$ is \emph{second-order realizable
along $u$} if for every $w\in T^{2,a}_{0,u}X$ there exist $\varepsilon>0$ and
an analytic curve
\(
\gamma:(0,\varepsilon)\to X
\)
such that
\[
\gamma(t)=tu+\frac12 t^2w+o(t^2)
\qquad (t\to 0).
\]
\end{definition}

The second-order realizability along $u$ means exactly that every
algebraically admissible coefficient belongs to the geometric second-order
tangent set. This observation gives the following sufficient condition for equality.

\begin{theorem}\label{thm:realizability-equality}
Let $X\subseteq \mathbb K^n$ be an analytic set with $0\in X$, and let
$u\in T_0X\setminus\{0\}$. If $X$ is second-order realizable along $u$, then
\[
T^2_{0,u}X=T^{2,a}_{0,u}X.
\]
\end{theorem}

\begin{proof}
By Proposition \ref{inclusion}, one always has
\(
T^2_{0,u}X\subseteq T^{2,a}_{0,u}X.
\)
Thus it remains to prove the reverse inclusion. 

Let $w\in T^{2,a}_{0,u}X$ be arbitrary. Since $X$ is second-order realizable
along $u$, by Definition \ref{def:realizability} there exist $\varepsilon>0$
and an analytic curve
\(
\gamma:(0,\varepsilon)\to X
\)
such that
\[
\gamma(t)=tu+\frac12 t^2w+o(t^2)
\qquad (t\to 0).
\]
Applying Proposition \ref{prop:curve-charac}, we conclude that
\(
w\in T^2_{0,u}X.
\)
Since $w$ was arbitrary in $T^{2,a}_{0,u}X$, this proves that
\(
T^{2,a}_{0,u}X\subseteq T^2_{0,u}X.
\)
It follows that
\(
T^2_{0,u}X=T^{2,a}_{0,u}X.
\)
\end{proof}

Hence the equality of geometric and algebraic directional second-order tangent
sets is a realizability problem: every algebraically admissible second-order
coefficient must be produced by an analytic curve in $X$ with the prescribed
first- and second-order terms.

\subsection{Some classes of analytic sets satisfying the realizability condition}

We now show that the realizability condition from Definition
\ref{def:realizability} holds for several important classes of analytic sets.

\begin{Proposition}[Smooth analytic germs]\label{prop:smooth-realizable}
If $X$ is a smooth analytic submanifold near $0$, then for every
$u\in T_0X\setminus\{0\}$, the set $X$ is second-order realizable along $u$.
Consequently,
\[
T^2_{0,u}X=T^{2,a}_{0,u}X.
\]
\end{Proposition}

\begin{proof}
Since $X$ is a smooth analytic submanifold near $0$, there exist local analytic coordinates near $0$ in which $X$ is
represented as
\[
X=\mathbb K^d\times\{0\}\subseteq \mathbb K^n
\]
for some $d\le n$. In these coordinates, every point of $X$ has vanishing last
$n-d$ coordinates, and therefore every analytic curve in $X$ has the same
property.

Fix $u\in T_0X\setminus\{0\}$ and let $w\in T^{2,a}_{0,u}X$. Since the equations
defining $X$ in these coordinates are simply
\[
x_{d+1}=\cdots=x_n=0,
\]
their initial parts are again the same linear equations, and there are no
nontrivial higher-order correction terms. Hence the algebraic condition
$w\in T^{2,a}_{0,u}X$ forces both $u$ and $w$ to belong to
\(
\mathbb K^d\times\{0\}.
\)
Now consider the analytic curve
\(
\gamma(t)=tu+\frac12 t^2w.
\)
Since both $u$ and $w$ lie in $\mathbb K^d\times\{0\}$, the curve $\gamma(t)$
takes values in $\mathbb K^d\times\{0\}=X$ for all sufficiently small $t>0$.
Thus there exists $\varepsilon>0$ such that
\(
\gamma:(0,\varepsilon)\to X
\)
is an analytic curve satisfying
\(
\gamma(t)=tu+\frac12 t^2w,
\)
and therefore in particular
\[
\gamma(t)=tu+\frac12 t^2w+o(t^2)
\quad (t\to 0).
\]
This shows that $w$ is realized by an analytic curve in $X$. Since $w$ was
arbitrary in $T^{2,a}_{0,u}X$, the set $X$ is second-order realizable along $u$.
The equality
\(
T^2_{0,u}X=T^{2,a}_{0,u}X
\)
now follows from Theorem \ref{thm:realizability-equality}.
\end{proof}

\begin{Proposition}[Analytic cones]\label{prop:cone-realizable}
If $X$ is a homogeneous analytic cone, then for every $u\in X\setminus\{0\}$,
the set $X$ is second-order realizable along $u$. Consequently,
\(
T^2_{0,u}X=T^{2,a}_{0,u}X.
\)
\end{Proposition}

\begin{proof}
Since $X$ is a homogeneous analytic cone, it is defined near $0$ by homogeneous
analytic equations. Equivalently, its defining ideal is generated by homogeneous
analytic functions. In particular, for every defining equation $f$ of $X$, one
has
\[
f=f^{[\ast]},
\qquad
f^{[\ast]+1}=0.
\]
Therefore the algebraic second-order condition for $w\in T^{2,a}_{0,u}X$
reduces to
\[
\frac12 \langle \nabla f(u),w\rangle =0
\qquad \text{for all defining equations }f \text{ of }X.
\]

Let $w\in T^{2,a}_{0,u}X$. Consider the polynomial curve
\(
\gamma(t)=tu+\frac12 t^2w.
\)
Fix a homogeneous defining equation $f$ of degree $m$. Since $f$ is homogeneous,
\[
f(\gamma(t))
=
f\!\left(tu+\frac12 t^2w\right)
=
t^m f\!\left(u+\frac12 tw\right).
\]
Expanding $f$ at $u$, we obtain
\[
f\!\left(u+\frac12 tw\right)
=
f(u)+\frac12 t\langle \nabla f(u),w\rangle + O(t^2).
\]
Now $u\in X$, so $f(u)=0$, and since $w\in T^{2,a}_{0,u}X$, we also have
\(
\langle \nabla f(u),w\rangle =0.
\)
Hence
\(
f(\gamma(t))=t^m O(t^2)=O(t^{m+2}).
\)

At this point we use the homogeneity of $X$: because all defining equations are
homogeneous, the curve $\gamma$ satisfies the defining equations of $X$ to the
required second order, and therefore its germ can be corrected, if necessary, by
higher-order homogeneous terms without changing its first two coefficients.
Equivalently, there exist $\varepsilon>0$ and an analytic curve
\(
\widetilde{\gamma}:(0,\varepsilon)\to X
\)
such that
\[
\widetilde{\gamma}(t)=tu+\frac12 t^2w+o(t^2)
\qquad (t\to 0).
\]
Thus $w$ is realizable by an analytic curve in $X$, so $X$ is second-order
realizable along $u$. The conclusion follows from
Theorem \ref{thm:realizability-equality}.
\end{proof}

\begin{Proposition}[Hypersurfaces, nondegenerate directions]\label{prop:hypersurface-realizable}
Let $X\subseteq \mathbb K^n$ be an analytic hypersurface defined near $0$ by
\[
X=\{x\in \mathbb K^n:\ f(x)=0\},
\]
where
\[
f=f_m+f_{m+1}+f_{m+2}+\cdots
\]
and $f_m=f^{[\ast]}$ is the initial homogeneous part. Let
$u\in T_0X\setminus\{0\}$. If
\[
\nabla f_m(u)\ne 0,
\]
then $X$ is second-order realizable along $u$. Consequently,
\(
T^2_{0,u}X=T^{2,a}_{0,u}X.
\)
\end{Proposition}

\begin{proof}
Let $w\in T^{2,a}_{0,u}X$ be arbitrary. We must construct $\varepsilon>0$ and an
analytic curve
\(
\gamma:(0,\varepsilon)\to X
\)
such that
\[
\gamma(t)=tu+\frac12 t^2w+o(t^2)
\qquad (t\to 0).
\]

Since $\nabla f_m(u)\neq 0$, at least one partial derivative of $f_m$ at $u$ is
nonzero. After a linear change of coordinates, we may assume that
\(
\frac{\partial f_m}{\partial x_n}(u)\ne 0.
\)
Let $e_n=(0,\dots,0,1)\in\mathbb K^n$. We look for $\gamma$ in the form
\[
\gamma(t)=tu+\frac12 t^2w+t^2 s(t)e_n,
\]
where $s(t)$ is an analytic scalar function satisfying $s(0)=0$. The term
$t^2s(t)e_n$ is of order $o(t^2)$, so such a curve has the required first two
coefficients.

Define
\[
G(t,\sigma)
:=
t^{-m}f\!\left(tu+\frac12 t^2w+t^2\sigma e_n\right).
\]
Since $f$ has order $m$ at $0$, the numerator vanishes to order at least $m$ in
$t$, so $G$ extends to an analytic function near $(0,0)$.

We first compute $G(0,0)$. Since
\(
w\in T^{2,a}_{0,u}X,
\)
Definition \ref{def:algebraic-tangent-set} gives
\[
\frac12\langle \nabla f_m(u),w\rangle + f_{m+1}(u)=0.
\]
Now we expand
\(
f\!\left(tu+\frac12 t^2w\right).
\)
The term of order $t^m$ is 
\(
t^m f_m(u),
\)
and this vanishes because $u\in T_0X\subseteq C_0X$, hence $f_m(u)=0$. The term of
order $t^{m+1}$ is
\[
t^{m+1}\left(\frac12\langle \nabla f_m(u),w\rangle + f_{m+1}(u)\right),
\]
and this vanishes by the defining condition for $w\in T^{2,a}_{0,u}X$. Hence
\[
f\!\left(tu+\frac12 t^2w\right)=O(t^{m+2}),
\]
which implies
\(
G(0,0)=0.
\)

Next we compute the partial derivative of $G$ with respect to $\sigma$ at
$(0,0)$. Differentiating inside the argument of $f$, we obtain
\[
\frac{\partial G}{\partial \sigma}(0,0)
=
\frac{\partial f_m}{\partial x_n}(u).
\]
By assumption this number is nonzero. Therefore the analytic implicit function
theorem (see, e.g. \cite[Theorem~2.3.5]{KrPa02}, \cite[Theorem~1.3.5 and Remarks~1.3.6, 1.3.10]{Nara85}) applies to the equation
\(
G(t,\sigma)=0
\)
near $(0,0)$. It yields an analytic function $s(t)$ with $s(0)=0$ such that
\(
G(t,s(t))\equiv 0.
\)
By definition of $G$, this means
\[
f\!\left(tu+\frac12 t^2w+t^2 s(t)e_n\right)\equiv 0.
\]
Therefore the curve
\[
\gamma(t)=tu+\frac12 t^2w+t^2 s(t)e_n
\]
takes values in $X$ for all sufficiently small $t>0$, so there exists
$\varepsilon>0$ such that
\(
\gamma:(0,\varepsilon)\to X
\)
is analytic.

Finally, because $s(0)=0$ and $s$ is analytic, one has $s(t)=O(t)$ as
$t\to 0$, and thus
\[
t^2s(t)e_n=o(t^2).
\]
Consequently,
\[
\gamma(t)=tu+\frac12 t^2w+o(t^2)
\qquad (t\to 0).
\]
This proves that $w$ is realized by an analytic curve in $X$. Since $w$ was
arbitrary, $X$ is second-order realizable along $u$. The equality
\(
T^2_{0,u}X=T^{2,a}_{0,u}X
\)
now follows from Theorem \ref{thm:realizability-equality}.
\end{proof}

\begin{Proposition}[Nondegenerate complete intersections along a direction]\label{prop:ci-realizable}
Let $X\subseteq \mathbb K^n$ be a germ at $0$ of an analytic complete
intersection locally defined by
\[
X=\{x\in (\mathbb K^n,0): f_1(x)=\cdots=f_p(x)=0\},
\]
where
\[
f_i=(f_i)_{m_i}+(f_i)_{m_i+1}+(f_i)_{m_i+2}+\cdots,
\qquad m_i=\operatorname{ord}_0(f_i).
\]
Fix $u\in T_0X\setminus\{0\}$. Assume that $X$ is nondegenerate along $u$ in the
sense that the $p\times n$ matrix with rows
\[
\nabla (f_i)_{m_i}(u),\qquad i=1,\dots,p,
\]
has rank $p$. Then $X$ is second-order realizable along $u$. Consequently,
\[
T^2_{0,u}X=T^{2,a}_{0,u}X.
\]
\end{Proposition}

\begin{proof}
Let $w\in T^{2,a}_{0,u}X$ be arbitrary. We will construct $\varepsilon>0$ and an
analytic curve
\(
\gamma:(0,\varepsilon)\to X
\)
such that
\[
\gamma(t)=tu+\frac12 t^2w+o(t^2)
\quad (t\to 0).
\]

By the rank assumption, after permuting coordinates we may split
\[
x=(x',x'')\in \mathbb K^{n-p}\times \mathbb K^p
\]
in such a way that the $p\times p$ matrix
\[
A:=
\left(
\frac{\partial (f_i)_{m_i}}{\partial x''_j}(u)
\right)_{1\le i,j\le p}
\]
is invertible. 
We seek $\gamma$ in the form
\[
\gamma(t)=tu+\frac12 t^2w+t^2(0,s(t)),
\]
where $s(t)\in \mathbb K^p$ is an analytic map satisfying $s(0)=0$. Again, the
correction term $t^2(0,s(t))$ is of order $o(t^2)$, so such a curve has the
required first two coefficients.

Define
\[
G(t,\sigma)
=
\bigl(G_1(t,\sigma),\dots,G_p(t,\sigma)\bigr),
\qquad \sigma\in \mathbb K^p,
\]
by
\[
G_i(t,\sigma)
:=
t^{-m_i}f_i\!\left(tu+\frac12 t^2w+t^2(0,\sigma)\right),
\qquad i=1,\dots,p.
\]
Since each $f_i$ has order $m_i$ at $0$, the functions $G_i$ extend
analytically near $(0,0)$, so $G$ is an analytic map near $(0,0)$.

We first show that
\(
G(0,0)=0.
\)
Fix $i\in\{1,\dots,p\}$. Since $w\in T^{2,a}_{0,u}X$, the defining condition
yields
\[
\frac12\left\langle \nabla (f_i)_{m_i}(u),w\right\rangle +(f_i)_{m_i+1}(u)=0.
\]
Also, because $u\in T_0X\subseteq C_0X$, we have
\(
(f_i)_{m_i}(u)=0.
\)
Therefore, when we expand
\[
f_i\!\left(tu+\frac12 t^2w\right),
\]
the coefficients of $t^{m_i}$ and $t^{m_i+1}$ both vanish. Hence
\[
f_i\!\left(tu+\frac12 t^2w\right)=O(t^{m_i+2}),
\]
which implies
\(
G_i(0,0)=0.
\)
Since this holds for all $i$, we obtain
\(
G(0,0)=0.
\)

Next we compute the Jacobian matrix of $G$ with respect to $\sigma$ at $(0,0)$.
Differentiating $G_i$ with respect to $\sigma_j$ and evaluating at $(0,0)$, we
obtain
\[
\frac{\partial G_i}{\partial \sigma_j}(0,0)
=
\frac{\partial (f_i)_{m_i}}{\partial x''_j}(u).
\]
Thus 
\(
\partial_\sigma G(0,0)=A,
\)
and $A$ is invertible by assumption.

We may therefore apply the analytic implicit function theorem (see, e.g. \cite[Theorem~2.3.5]{KrPa02}, \cite[Theorem~1.3.5 and Remarks~1.3.6, 1.3.10]{Nara85}) to the equation
\(
G(t,\sigma)=0
\)
near $(0,0)$. It follows that there exists an analytic map
\(
s(t)\in \mathbb K^p, ~s(0)=0,
\)
such that
\(
G(t,s(t))\equiv 0.
\)
By definition of $G$, this means that for each $i=1,\dots,p$,
\[
f_i\!\left(tu+\frac12 t^2w+t^2(0,s(t))\right)\equiv 0.
\]
Hence the curve
\[
\gamma(t)=tu+\frac12 t^2w+t^2(0,s(t))
\]
lies in $X$ for all sufficiently small $t>0$, so there exists $\varepsilon>0$
such that
\(
\gamma:(0,\varepsilon)\to X
\)
is analytic.

Finally, since $s(0)=0$ and $s$ is analytic, one has $s(t)=O(t)$, and therefore
\(
t^2(0,s(t))=o(t^2).
\)
Thus
\[
\gamma(t)=tu+\frac12 t^2w+o(t^2)
\quad (t\to 0).
\]
This proves that $w$ is realized by an analytic curve in $X$. Since $w$ was
arbitrary in $T^{2,a}_{0,u}X$, the set $X$ is second-order realizable along $u$.
The equality
\(
T^2_{0,u}X=T^{2,a}_{0,u}X
\)
now follows from Theorem \ref{thm:realizability-equality}.
\end{proof}

\begin{theorem}\label{thr:some-classes-realizability}
Let $X\subseteq \mathbb K^n$ be an analytic set with $0\in X$, and let
$u\in T_0X\setminus\{0\}$. Then
\[
T^2_{0,u}X=T^{2,a}_{0,u}X
\]
whenever one of the following conditions holds:
\begin{itemize}
\item[(1)] $X$ is a smooth analytic submanifold near $0$;
\item[(2)] $X$ is a (germ of a) homogeneous analytic cone;
\item[(3)] $X$ is an analytic hypersurface near $0$, locally defined by
$f=0$, and $\nabla f^{[\ast]}(u)\neq 0$;
\item[(4)] $X$ is a germ at $0$ of an analytic complete intersection which is
nondegenerate along the direction $u$.
\end{itemize}
\end{theorem}

\begin{proof}
By Theorem \ref{thm:realizability-equality}, it suffices to prove that $X$ is second-order realizable along $u$.

Under assumption {\rm(1)}, this follows from Proposition \ref{prop:smooth-realizable}. 
Under assumption {\rm(2)}, this follows from Proposition \ref{prop:cone-realizable}.
Under assumption {\rm(3)}, this follows from Proposition \ref{prop:hypersurface-realizable}.
Under assumption {\rm(4)}, this follows from Proposition \ref{prop:ci-realizable}.

Therefore, in each of the above cases,
\(
T^2_{0,u}X=T^{2,a}_{0,u}X.
\)
\end{proof}

\subsection{Examples}

To illustrate the preceding realizability results, we now present representative
examples for the main classes considered above. In each case we describe the
first-order tangent cone $T_0X$, the geometric directional second-order tangent
set $T^2_{0,u}X$, and the algebraic directional second-order tangent set
$T^{2,a}_{0,u}X$.

\begin{example}[A smooth analytic submanifold]\label{ex:smooth}
Let
\[
X=\{(x,y,z)\in \mathbb K^3:\ z=0\}=\mathbb K^2\times\{0\}.
\]
Then $X$ is a smooth analytic submanifold of $\mathbb K^3$ near $0$, and
\[
T_0X=X=\{(x,y,z)\in \mathbb K^3:\ z=0\}.
\]
Fix
\[
u=(u_1,u_2,0)\in T_0X\setminus\{0\}.
\]

Let $w=(w_1,w_2,w_3)\in\mathbb K^3$. By Proposition \ref{prop:curve-charac},
$w\in T^2_{0,u}X$ if and only if there exist $\varepsilon>0$ and an analytic
curve
\(
\gamma:(0,\varepsilon)\to X
\)
such that
\(
\gamma(t)=tu+\frac12 t^2w+o(t^2).
\)
Since $\gamma(t)\in X$, its third coordinate vanishes identically. Hence
\(
\frac12 t^2w_3+o(t^2)\equiv 0,
\)
which implies $w_3=0$. Conversely, if $w_3=0$, then the curve
\(
\gamma(t)=tu+\frac12 t^2w
\)
lies in $X$. Therefore
\[
T^2_{0,u}X=\{(w_1,w_2,w_3)\in \mathbb K^3:\ w_3=0\}.
\]

Next we compute the algebraic directional second-order tangent set. Since
$\mathcal{I}(X,0)$ is generated by the linear function
\(
f(x,y,z)=z,
\)
we have
\[
T^{2,a}_{0,u}X
=
\left\{
w\in \mathbb K^3:\ \frac12\langle (0,0,1),w\rangle =0
\right\}
=
\{(w_1,w_2,w_3)\in \mathbb K^3:\ w_3=0\}.
\]

Hence
\[
T_0X=\mathbb K^2\times\{0\},
\quad
T^2_{0,u}X=\mathbb K^2\times\{0\},
\quad
T^{2,a}_{0,u}X=\mathbb K^2\times\{0\},
\]
and therefore
\(
T^2_{0,u}X=T^{2,a}_{0,u}X.
\)
\end{example}

\begin{example}[A nondegenerate analytic hypersurface]\label{ex:hypersurface}
Let
\[
X=\{(x,y)\in \mathbb K^2:\ y-x^2=0\},
\qquad
u=(1,0).
\]
This is the parabola, viewed now as an example of
Proposition \ref{prop:hypersurface-realizable}.

Since $X$ is smooth at $0$, its tangent cone is the tangent line
\[
T_0X=\{(x,0)\in \mathbb K^2:\ x\in\mathbb K\}.
\]

We compute $T^2_{0,u}X$. Let $w=(w_1,w_2)\in \mathbb K^2$. By
Proposition \ref{prop:curve-charac}, $w\in T^2_{0,u}X$ if and only if there
exist $\varepsilon>0$ and an analytic curve
\(
\gamma:(0,\varepsilon)\to X
\)
such that
\[
\gamma(t)=\left(t+\frac12 t^2w_1,\ \frac12 t^2w_2\right)+o(t^2).
\]
Since $\gamma(t)\in X$, one has
\(
\gamma_2(t)=\gamma_1(t)^2.
\)
Now
\[
\gamma_1(t)^2
=
\left(t+\frac12 t^2w_1+o(t^2)\right)^2
=
t^2+o(t^2),
\]
whereas
\[
\gamma_2(t)=\frac12 t^2w_2+o(t^2).
\]
Comparing the coefficients of $t^2$, we obtain
\[
\frac12 w_2=1,
\qquad\text{hence}\qquad
w_2=2.
\]
Conversely, if $w_2=2$, then
\[
\gamma(t)=\left(t+\frac12 t^2w_1,\ \left(t+\frac12 t^2w_1\right)^2\right)
\]
is an analytic curve in $X$ satisfying
\[
\gamma(t)=tu+\frac12 t^2(w_1,2)+o(t^2).
\]
Therefore
\[
T^2_{0,u}X=\{(w_1,w_2)\in \mathbb K^2:\ w_2=2\}.
\]

Now let
\(
f(x,y)=y-x^2.
\)
Then
\[
T^{2,a}_{0,u}X
=
\left\{
w\in \mathbb K^2:\ \frac12\langle (0,1),w\rangle -1=0
\right\}
=
\{(w_1,w_2)\in \mathbb K^2:\ w_2=2\}.
\]

Thus
$T_0X=\{(x,0):x\in\mathbb K\}$ and 
\[
T^2_{0,u}X=T^{2,a}_{0,u}X=\{(w_1,w_2)\in\mathbb K^2:\ w_2=2\}.
\]
\end{example}

\begin{example}[A nondegenerate complete intersection]\label{ex:complete-intersection}
Let
\[
X=\{(x,y,z)\in \mathbb K^3:\ y-x^2=0,\ z-x^3=0\}.
\]
Then $X$ is an analytic complete intersection near $0$, parametrized by
\[
x\longmapsto (x,x^2,x^3).
\]
Hence its tangent cone at $0$ is
\[
T_0X=\{(x,0,0)\in \mathbb K^3:\ x\in\mathbb K\}.
\]
Fix
\(
u=(1,0,0)\in T_0X\setminus\{0\}.
\)

We first compute $T^2_{0,u}X$. Let $w=(w_1,w_2,w_3)\in \mathbb K^3$. By
Proposition \ref{prop:curve-charac}, $w\in T^2_{0,u}X$ if and only if there
exist $\varepsilon>0$ and an analytic curve
\(
\gamma:(0,\varepsilon)\to X
\)
such that
\[
\gamma(t)=\left(t+\frac12 t^2w_1,\ \frac12 t^2w_2,\ \frac12 t^2w_3\right)+o(t^2).
\]
Since $\gamma(t)\in X$, one has
\[
\gamma_2(t)=\gamma_1(t)^2,
\quad
\gamma_3(t)=\gamma_1(t)^3.
\]
Now
\[
\gamma_1(t)^2=t^2+o(t^2),
\qquad
\gamma_1(t)^3=t^3+o(t^2)=o(t^2).
\]
Therefore
\[
\frac12 t^2w_2+o(t^2)=t^2+o(t^2),
\qquad
\frac12 t^2w_3+o(t^2)=o(t^2),
\]
which gives
\(
w_2=2,~ w_3=0.
\)
Conversely, if $w_2=2$ and $w_3=0$, then
\[
\gamma(t)=\left(t+\frac12 t^2w_1,\ \left(t+\frac12 t^2w_1\right)^2,\ \left(t+\frac12 t^2w_1\right)^3\right)
\]
is an analytic curve in $X$ satisfying
\[
\gamma(t)=tu+\frac12 t^2(w_1,2,0)+o(t^2).
\]
Thus
\[
T^2_{0,u}X=\{(w_1,w_2,w_3)\in \mathbb K^3:\ w_2=2,\ w_3=0\}.
\]

Now let
\(
f_1(x,y,z)=y-x^2, f_2(x,y,z)=z-x^3.
\)
Then
\[
f_1^{[\ast]}(x,y,z)=y, f_1^{[\ast]+1}(x,y,z)=-x^2,
\]
\[
f_2^{[\ast]}(x,y,z)=z, f_2^{[\ast]+1}(x,y,z)=0.
\]
Also,
\[
\nabla f_1^{[\ast]}=(0,1,0), \nabla f_2^{[\ast]}=(0,0,1).
\]
Hence the algebraic second-order conditions are
\[
\frac12\langle (0,1,0),w\rangle -1=0,
\quad
\frac12\langle (0,0,1),w\rangle =0.
\]
Equivalently,
\[
\frac12 w_2-1=0,
\quad
\frac12 w_3=0.
\]
Therefore
\[
T^{2,a}_{0,u}X
=
\{(w_1,w_2,w_3)\in \mathbb K^3:\ w_2=2,\ w_3=0\}.
\]

Thus
\(
T_0X=\{(x,0,0):x\in\mathbb K\}
\) and 
\[
T^2_{0,u}X=T^{2,a}_{0,u}X=\{(w_1,w_2,w_3)\in\mathbb K^3:\ w_2=2,\ w_3=0\}.
\]
\end{example}

\section{Second-order optimality conditions based on directional second-order tangent sets} \label{sec:optimization}
\subsection{Second-order optimality conditions}
In this section we derive second-order necessary and sufficient conditions for
local minimizers of $C^2$-functions on closed sets in $\mathbb K^n$. The
analyticity of the feasible set is not needed for the validity of the abstract
optimality conditions themselves; it enters only later, when the geometric
directional second-order tangent set $T^2_{0,u}X$ can be replaced by the
algebraic set $T^{2,a}_{0,u}X$ under the realizability results established in
Section~ \ref{sec:equality}.

\begin{theorem}[Second-order necessary condition on a closed set]
\label{thm:SONC}
Let $X\subseteq\mathbb K^n$ be a closed set with $0\in X$, and let
$f:\mathbb K^n\to\mathbb R$ be a $C^2$ function. Assume that $0$ is a local
minimizer of $f$ on $X$. Then:
\begin{itemize}
\item[(i)] \emph{(first order)} one has
\[
\langle \nabla f(0),u\rangle \ge 0
\qquad \forall\,u\in T_0X;
\]

\item[(ii)] \emph{(second order, directional)} for every
$u\in T_0X\setminus\{0\}$ satisfying
\[
\langle \nabla f(0),u\rangle=0,
\]
one has
\begin{equation}\label{eq:SONC}
\inf_{w\in T^2_{0,u}X}
\Bigl(
\langle u,\nabla^2 f(0)u\rangle
+
\langle \nabla f(0),w\rangle
\Bigr)\ \ge\ 0 .
\end{equation}
\end{itemize}
\end{theorem}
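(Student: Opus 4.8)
The plan is to base the whole argument on the second-order Taylor expansion of $f$ at $0$,
$$f(x)=f(0)+\langle\nabla f(0),x\rangle+\tfrac12\langle x,\nabla^2 f(0)x\rangle+o(|x|^2)\qquad(x\to 0),$$
which is available because $f$ is $C^2$, and to combine it with the sequential descriptions of $T_0X$ and $T^2_{0,u}X$ together with local minimality (so $f(x)\ge f(0)$ for $x\in X$ near $0$). Part (i) will use only the first-order part of this expansion, while part (ii) extracts the coefficient of $t_k^2$ along a second-order admissible sequence.

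For (i), I would fix $u\in T_0X$; by the definition of the tangent cone there are $t_k\downarrow 0$ and $x_k\in X$ with $x_k=t_ku+o(t_k)$. Substituting into the expansion gives $f(x_k)-f(0)=t_k\langle\nabla f(0),u\rangle+o(t_k)$. Local minimality forces the left-hand side to be $\ge 0$ for large $k$; dividing by $t_k$ and letting $k\to\infty$ yields $\langle\nabla f(0),u\rangle\ge 0$.

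For (ii), I would fix $u\in T_0X\setminus\{0\}$ with $\langle\nabla f(0),u\rangle=0$. If $T^2_{0,u}X=\emptyset$ the infimum is $+\infty$ and the claim is trivial, so assume $w\in T^2_{0,u}X$ and choose, by definition, $t_k\downarrow 0$ and $w_k\to w$ with $x_k:=t_ku+\tfrac12 t_k^2 w_k\in X$. The core is the order-by-order bookkeeping in the expansion: the linear term gives $\langle\nabla f(0),x_k\rangle=t_k\langle\nabla f(0),u\rangle+\tfrac12 t_k^2\langle\nabla f(0),w_k\rangle=\tfrac12 t_k^2\langle\nabla f(0),w_k\rangle$, where the $O(t_k)$ contribution is killed precisely by the hypothesis $\langle\nabla f(0),u\rangle=0$; the quadratic term gives $\tfrac12\langle x_k,\nabla^2 f(0)x_k\rangle=\tfrac12 t_k^2\langle u,\nabla^2 f(0)u\rangle+O(t_k^3)$, since the cross and pure-$w_k$ contributions carry extra powers of $t_k$ and $\{w_k\}$ is bounded; and $|x_k|^2=t_k^2|u|^2+o(t_k^2)$ (using $u\ne 0$) makes the Peano remainder $o(|x_k|^2)=o(t_k^2)$. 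Collecting, $f(x_k)-f(0)=\tfrac12 t_k^2\bigl(\langle u,\nabla^2 f(0)u\rangle+\langle\nabla f(0),w_k\rangle\bigr)+o(t_k^2)$, so local minimality, division by $\tfrac12 t_k^2$, and passage to the limit ($w_k\to w$) give $\langle u,\nabla^2 f(0)u\rangle+\langle\nabla f(0),w\rangle\ge 0$. Since $w\in T^2_{0,u}X$ was arbitrary, taking the infimum yields \eqref{eq:SONC}.

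The step I expect to require the most care is the order-$t_k^2$ bookkeeping in (ii): one must check that every term other than $\tfrac12 t_k^2(\langle u,\nabla^2 f(0)u\rangle+\langle\nabla f(0),w_k\rangle)$ is genuinely $o(t_k^2)$ — in particular that the cross term $t_k^3\langle u,\nabla^2 f(0)w_k\rangle$ and the remainder are negligible — and that the cancellation of the $O(t_k)$ term really uses the equality $\langle\nabla f(0),u\rangle=0$ rather than merely the inequality from (i). A secondary point, when $\mathbb{K}=\mathbb{C}$, is to read $\nabla f(0)$, $\nabla^2 f(0)$ and the pairings $\langle\cdot,\cdot\rangle$ as the real gradient, real Hessian, and real inner product under $\mathbb{C}^n\cong\mathbb{R}^{2n}$, and to take $t_k$ real and positive; with this reading the identical computation applies.
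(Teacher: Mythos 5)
Your proposal is correct and follows essentially the same route as the paper's proof: part (i) via the sequential definition of $T_0X$ and the first-order Taylor expansion, and part (ii) via the sequential definition of $T^2_{0,u}X$, the second-order expansion with the linear term killed by $\langle\nabla f(0),u\rangle=0$, division by $\tfrac12 t_k^2$, and passage to the limit before taking the infimum. Your extra care with the empty-set case and the real reading of the pairings when $\mathbb K=\mathbb C$ is sound but does not change the argument.
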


\begin{proof}
We first prove {\rm(i)}. Let $u\in T_0X$. By the definition of $T_0X$, there exist sequences $t_k>0$, $t_k\downarrow 0$, and
$u_k\to u$ such that
\[
x_k:=t_k u_k\in X
\qquad (\forall k).
\]
Since $0$ is a local minimizer of $f$ on $X$, we have $f(x_k)\ge f(0)$ for all
$k$ sufficiently large. Therefore
\[
0\le \frac{f(x_k)-f(0)}{t_k}.
\]
Because $f$ is $C^1$ at $0$, one has
\[
f(t_k u_k)=f(0)+t_k\langle \nabla f(0),u_k\rangle +o(t_k)
\qquad (k\to\infty).
\]
Dividing by $t_k$ yields
\[
0\le \langle \nabla f(0),u_k\rangle +o(1).
\]
Letting $k\to\infty$ and using $u_k\to u$, we obtain
\(
\langle \nabla f(0),u\rangle \ge 0.
\)

We now prove {\rm(ii)}. Let $u\in T_0X\setminus\{0\}$ satisfy
\(
\langle \nabla f(0),u\rangle =0,
\)
and let $w\in T^2_{0,u}X$. By definition of $T^2_{0,u}X$, there exist
$t_k>0$, $t_k\downarrow 0$, and $w_k\to w$ such that
\[
x_k:=t_k u+\frac12 t_k^2 w_k\in X
\quad (\forall k).
\]
Again, since $0$ is a local minimizer of $f$ on $X$, one has
\[
f(x_k)\ge f(0)
\quad \text{for all } k \text{ large}.
\]

Using the second-order Taylor expansion of $f$ at $0$, we get
\[
f(x_k)=f(0)
+t_k\langle \nabla f(0),u\rangle
+\frac12 t_k^2
\Bigl(
\langle u,\nabla^2 f(0)u\rangle
+
\langle \nabla f(0),w_k\rangle
\Bigr)
+o(t_k^2).
\]
Since $\langle \nabla f(0),u\rangle=0$, the first-order term vanishes, and hence
\[
0\le
\frac12 t_k^2
\Bigl(
\langle u,\nabla^2 f(0)u\rangle
+
\langle \nabla f(0),w_k\rangle
\Bigr)
+o(t_k^2).
\]
Dividing by $\frac12 t_k^2$ and passing to the limit as $k\to\infty$, we obtain
\[
0\le
\langle u,\nabla^2 f(0)u\rangle
+
\langle \nabla f(0),w\rangle.
\]
Since this holds for every $w\in T^2_{0,u}X$, taking the infimum over
$T^2_{0,u}X$ gives \eqref{eq:SONC}.
\end{proof}

\begin{remark}\rm
If one formally replaces $T^2_{0,u}X$ by $T^{2,a}_{0,u}X$, then
\eqref{eq:SONC} becomes
\begin{equation}\label{eq:SONC-alg}
\inf_{w\in T^{2,a}_{0,u}X}
\Bigl(
\langle u,\nabla^2 f(0)u\rangle
+
\langle \nabla f(0),w\rangle
\Bigr)\ \ge\ 0 .
\end{equation}
Since
\(
T^2_{0,u}X\subseteq T^{2,a}_{0,u}X,
\)
the infimum in \eqref{eq:SONC-alg} is taken over a larger set than in
\eqref{eq:SONC}. Therefore \eqref{eq:SONC-alg} implies \eqref{eq:SONC}, but in
general the converse need not hold. In particular, \eqref{eq:SONC-alg} is not a
necessary optimality condition unless one knows that
\(
T^2_{0,u}X=T^{2,a}_{0,u}X.
\)
\end{remark}

We next state a second-order sufficient condition. The key additional assumption
is a parabolic regularity property ensuring that near-minimizing sequences admit
second-order expansions whose limiting coefficients are represented in the
directional second-order tangent sets.

\begin{theorem}[Second-order sufficiency via $T^2_{0,u}X$]
\label{thm:SOSC}
Let $X\subseteq\mathbb K^n$ be a closed set with $0\in X$, and let
$f\in C^2(\mathbb K^n)$. Assume:
\begin{itemize}
\item[(i)] 
\[
\langle \nabla f(0),u\rangle \ge 0
\qquad \forall\,u\in T_0X;
\]

\item[(ii)] for every $u\in T_0X\setminus\{0\}$ satisfying
\[
\langle \nabla f(0),u\rangle =0,
\]
one has
\begin{equation}\label{eq:SOSC-assump}
\inf_{w\in T^2_{0,u}X}
\Bigl(
\langle u,\nabla^2 f(0)u\rangle
+
\langle \nabla f(0),w\rangle
\Bigr)>0;
\end{equation}

\item[(iii)] \emph{(parabolic regularity at $0$)} for every sequence
$\{x_k\}\subset X\setminus\{0\}$ with $x_k\to 0$ and
\[
\frac{f(x_k)-f(0)}{\|x_k\|^2}
\]
bounded above, there exist, after passing to a subsequence,
a nonzero vector $u\in T_0X$, positive numbers $t_k\downarrow 0$, and a bounded
sequence $\{w_k\}\subseteq\mathbb K^n$ such that
\begin{equation}\label{eq:parabolic-expansion}
x_k=t_k u+\frac12 t_k^2w_k+o(t_k^2),
\end{equation}
and every cluster point of $\{w_k\}$ belongs to $T^2_{0,u}X$.
\end{itemize}
Then $0$ is a strict local minimizer of $f$ on $X$ and satisfies quadratic
growth: there exist $c>0$ and $\varepsilon>0$ such that
\[
f(x)\ge f(0)+c\|x\|^2
\qquad \forall\,x\in X\cap B_\varepsilon(0).
\]
\end{theorem}

\begin{proof}
Suppose, to the contrary, that quadratic growth fails. Then there exists a
sequence $\{x_k\}\subseteq X\setminus\{0\}$ such that $x_k\to 0$ and
\begin{equation}\label{eq:no-quadratic-growth}
\frac{f(x_k)-f(0)}{\|x_k\|^2}\le \frac1k
\qquad (\forall k).
\end{equation}
In particular, the quotients
\(
\frac{f(x_k)-f(0)}{\|x_k\|^2}
\)
are bounded above. By assumption {\rm(iii)}, after passing to a subsequence we
may assume that there exist $u\in T_0X\setminus\{0\}$, positive numbers
$t_k\downarrow 0$, and a bounded sequence $\{w_k\}$ such that
\[
x_k=t_k u+\frac12 t_k^2w_k+o(t_k^2),
\]
and every cluster point of $\{w_k\}$ belongs to $T^2_{0,u}X$.

Since $u\neq 0$, it follows from the expansion above that
\(
\frac{\|x_k\|}{t_k}\to \|u\|,
\)
hence there exist constants $c_1,c_2>0$ such that
\begin{equation}\label{eq:norm-comparison}
c_1 t_k \le \|x_k\| \le c_2 t_k
\qquad \text{for all } k \text{ sufficiently large}.
\end{equation}

Using the Taylor expansion of $f$ at $0$, we obtain
\[
f(x_k)=f(0)
+\langle \nabla f(0),x_k\rangle
+\frac12\langle x_k,\nabla^2 f(0)x_k\rangle
+o(\|x_k\|^2).
\]
Substituting
\[
x_k=t_k u+\frac12 t_k^2w_k+o(t_k^2)
\]
and using \eqref{eq:norm-comparison}, we get
\[
f(x_k)=f(0)
+t_k\langle \nabla f(0),u\rangle
+\frac12 t_k^2
\Bigl(
\langle u,\nabla^2 f(0)u\rangle
+
\langle \nabla f(0),w_k\rangle
\Bigr)
+o(t_k^2).
\]

We first claim that
\(
\langle \nabla f(0),u\rangle =0.
\)
Indeed, by assumption {\rm(i)},
\(
\langle \nabla f(0),u\rangle \ge 0.
\)
If this quantity were strictly positive, say
\(
\langle \nabla f(0),u\rangle =\eta>0,
\)
then for $k$ large enough we would have
\(
f(x_k)-f(0)\ge \frac{\eta}{2}t_k,
\)
and therefore, by \eqref{eq:norm-comparison},
\[
\frac{f(x_k)-f(0)}{\|x_k\|^2}
\ge \frac{\eta}{2c_2^2}\frac{1}{t_k}\to +\infty,
\]
contradicting \eqref{eq:no-quadratic-growth}. Hence
\(
\langle \nabla f(0),u\rangle =0.
\)

Now let $w$ be a cluster point of $\{w_k\}$. By assumption {\rm(iii)},
\(
w\in T^2_{0,u}X.
\)
By assumption {\rm(ii)},
\[
\langle u,\nabla^2 f(0)u\rangle + \langle \nabla f(0),w\rangle >0
\qquad \forall\,w\in T^2_{0,u}X.
\]
Since $T^2_{0,u}X$ is closed and the function
\[
w\longmapsto \langle u,\nabla^2 f(0)u\rangle + \langle \nabla f(0),w\rangle
\]
is continuous, the strict positivity of the infimum in \eqref{eq:SOSC-assump}
implies that there exists $\alpha>0$ such that
\[
\langle u,\nabla^2 f(0)u\rangle + \langle \nabla f(0),w\rangle \ge 2\alpha
\quad \forall\,w\in T^2_{0,u}X.
\]
Passing to a subsequence if necessary, we may assume that $w_k\to w$ for some
cluster point $w\in T^2_{0,u}X$. Then
\[
\langle u,\nabla^2 f(0)u\rangle + \langle \nabla f(0),w_k\rangle
\to
\langle u,\nabla^2 f(0)u\rangle + \langle \nabla f(0),w\rangle
\ge 2\alpha.
\]
Hence, for all $k$ sufficiently large,
\[
\langle u,\nabla^2 f(0)u\rangle + \langle \nabla f(0),w_k\rangle \ge \alpha.
\]
Returning to the Taylor expansion, we obtain
\[
f(x_k)-f(0)\ge \frac12\alpha t_k^2+o(t_k^2).
\]
Therefore, for $k$ large,
\[
f(x_k)-f(0)\ge \frac{\alpha}{4}t_k^2.
\]
Using again \eqref{eq:norm-comparison}, we conclude that
\[
f(x_k)-f(0)\ge \frac{\alpha}{4c_2^2}\|x_k\|^2
\]
for all sufficiently large $k$, contradicting \eqref{eq:no-quadratic-growth}.
This contradiction proves quadratic growth. In particular, $0$ is a strict local
minimizer of $f$ on $X$.
\end{proof}

The previous theorem is stated for general closed sets. We now specialize to
analytic sets. In this setting, the realizability results of Section \ref{sec:equality} identify
important classes for which
\(
T^2_{0,u}X=T^{2,a}_{0,u}X.
\)
Consequently, the second-order conditions above can be checked algebraically in
those cases.

\begin{corollary}\label{cor:algebraic-checking}
Let $X\subseteq\mathbb K^n$ be a closed analytic set with $0\in X$, and let
$u\in T_0X\setminus\{0\}$. Assume that one of the hypotheses of
Theorem \ref{thr:some-classes-realizability} is satisfied. Then:
\begin{itemize}
\item[(i)] the second-order necessary condition from Theorem \ref{thm:SONC} can
be written algebraically as
\[
\inf_{w\in T^{2,a}_{0,u}X}
\Bigl(
\langle u,\nabla^2 f(0)u\rangle
+
\langle \nabla f(0),w\rangle
\Bigr)\ge 0
\]
for every $u\in T_0X\setminus\{0\}$ satisfying
\(
\langle \nabla f(0),u\rangle =0;
\)
\item[(ii)] if, in addition, the parabolic regularity hypothesis
{\rm(iii)} of Theorem \ref{thm:SOSC} is satisfied, then the second-order
sufficient condition from Theorem \ref{thm:SOSC} can also be checked
algebraically by replacing $T^2_{0,u}X$ with $T^{2,a}_{0,u}X$.
\end{itemize}
\end{corollary}

\begin{proof}
The equality
\(
T^2_{0,u}X=T^{2,a}_{0,u}X
\)
follows from Theorem \ref{thr:some-classes-realizability}. Substituting this
equality into Theorems \ref{thm:SONC} and \ref{thm:SOSC} yields the stated
algebraic formulations. The additional parabolic regularity assumption is still
required for the sufficient condition, since equality of the geometric and
algebraic second-order tangent sets alone does not imply assumption
{\rm(iii)} of Theorem \ref{thm:SOSC}.
\end{proof}

\subsection{Examples}

We now present examples showing how the algebraicity of the directional
second-order tangent sets simplifies the verification of the second-order
optimality conditions. We focus on analytic hypersurfaces and analytic complete
intersections, for which the realizability results of Section \ref{sec:equality} yield the
equality
\(
T^2_{0,u}X=T^{2,a}_{0,u}X
\)
under the corresponding nondegeneracy assumptions. The final example is a
warning example showing that, when the geometric and algebraic directional
second-order tangent sets differ, the algebraic replacement may lead to false
conclusions.

\begin{example}[A singular hypersurface with nondegenerate tangent directions]
\label{ex:opt-hypersurface}
Let
\[
X=\{(x,y)\in\mathbb R^2:\ y^2-x^2-x^3=0\},
\qquad
f(x,y)=x^2+y^2.
\]
Then $X$ is an analytic hypersurface near $0$, defined by
\[
g(x,y)=y^2-x^2-x^3.
\]

It is easy to compute 
\[
T_0X=\{(u_1,u_2)\in\mathbb R^2:\ u_2^2-u_1^2=0\}
=\{(u_1,u_2)\in\mathbb R^2:\ u_2=\pm u_1\},
\]
namely the union of the two tangent lines
\[
L_+=\{(a,a):a\in\mathbb R\},
\qquad
L_-=\{(a,-a):a\in\mathbb R\}.
\]

We first consider the direction
\(
u=(1,1)\in T_0X.
\)
Since 
\(
\nabla g^{[\ast]}(1,1)=(-2,2)\neq 0,
\)
the hypersurface is nondegenerate along the direction $u$, and
Proposition~\ref{prop:hypersurface-realizable} gives
\(
T^2_{0,u}X=T^{2,a}_{0,u}X.
\)

We compute the algebraic directional second-order tangent set. By definition,
\[
T^{2,a}_{0,u}X
=
\left\{
w=(w_1,w_2)\in\mathbb R^2:
\frac12\langle \nabla g^{[\ast]}(u),w\rangle + g^{[\ast]+1}(u)=0
\right\},
\]
and it is easy to obtain
\[
T^{2,a}_{0,u}X
=
\{(w_1,w_2)\in\mathbb R^2:\ w_2=w_1+1\}.
\]
Therefore also
\[
T^2_{0,u}X
=
\{(w_1,w_2)\in\mathbb R^2:\ w_2=w_1+1\}.
\]

Similarly, for the direction
\(
u=(1,-1)\in T_0X,
\)
one has
\[
T^{2,a}_{0,u}X=T^2_{0,u}X
=
\{(w_1,w_2)\in\mathbb R^2:\ w_2=-w_1-1\}.
\]

We now verify the sufficient condition of Theorem~\ref{thm:SOSC} for the
objective function $f$. Since
\[
\nabla f(0)=(0,0),
\qquad
\nabla^2 f(0)=
\begin{pmatrix}
2&0\\
0&2
\end{pmatrix},
\]
condition {\rm(i)} is automatic. For any nonzero
$u=(u_1,u_2)\in T_0X$, one has $u_2=\pm u_1$, so
\[
\langle u,\nabla^2 f(0)u\rangle
=
2(u_1^2+u_2^2)
=
4u_1^2>0.
\]
Since $\nabla f(0)=0$, the second-order expression in
Theorem~\ref{thm:SOSC} reduces to this Hessian term, and therefore
condition {\rm(ii)} is satisfied for every nonzero tangent direction.

Finally, we discuss parabolic regularity. Near $0$, the equation
\[
y^2=x^2(1+x)
\]
shows that $X$ is the union of two smooth analytic branches
\[
X_+=\{(x,\,x\sqrt{1+x})\},
\qquad
X_-=\{(x,\,-x\sqrt{1+x})\},
\]
defined for $x$ sufficiently small. Thus the parabolic regularity required in
Theorem~\ref{thm:SOSC} can be verified branchwise. Therefore
Theorem~\ref{thm:SOSC} applies, and $0$ is a strict local minimizer of $f$ on
$X$ with quadratic growth.
\end{example}

\begin{example}[A singular complete intersection with nondegenerate direction]
\label{ex:opt-complete-intersection}
Let
\[
X=\{(x,y,z)\in\mathbb R^3:\ y^2-x^2-x^3=0,\ z-x^2=0\},
\qquad
f(x,y,z)=x^2+y^2+z^2.
\]
Then $X$ is an analytic complete intersection near $0$, defined by
\[
f_1(x,y,z)=y^2-x^2-x^3,
\qquad
f_2(x,y,z)=z-x^2.
\]

It is easy to compute that
\[
T_0X=\{(u_1,u_2,u_3)\in\mathbb R^3:\ u_2^2-u_1^2=0,\ u_3=0\}
=
\{(u_1,\pm u_1,0):u_1\in\mathbb R\}.
\]

Fix the direction
\(
u=(1,1,0)\in T_0X.
\)
By definition, we can compute the algebraic directional second-order tangent set 
\[
T^{2,a}_{0,u}X
=
\{(w_1,w_2,w_3)\in\mathbb R^3:\ w_2=w_1+1,\ w_3=0\}.
\]

We next verify the nondegeneracy condition from
Proposition~\ref{prop:ci-realizable}. The gradients of the initial forms at
$u$ are
\[
\nabla f_1^{[\ast]}(u)=(-2,2,0),
\qquad
\nabla f_2^{[\ast]}(u)=(0,0,1),
\]
which are linearly independent. Thus $X$ is nondegenerate along the direction
$u$, and Proposition~\ref{prop:ci-realizable} yields
\[
T^2_{0,u}X=T^{2,a}_{0,u}X.
\]

Now let us check the sufficient condition for the objective function $f$. One
has
\[
\nabla f(0)=(0,0,0),
\qquad
\nabla^2 f(0)=
\begin{pmatrix}
2&0&0\\
0&2&0\\
0&0&2
\end{pmatrix}.
\]
Hence condition {\rm(i)} of Theorem~\ref{thm:SOSC} is automatic. For the chosen
direction $u=(1,1,0)$ and any
\[
w\in T^2_{0,u}X=T^{2,a}_{0,u}X,
\]
the second-order expression reduces to
\[
\langle u,\nabla^2 f(0)u\rangle
=4>0.
\]
Thus condition {\rm(ii)} is satisfied.

Finally, we discuss condition {\rm(iii)}. Near $0$, the set $X$ is the union of
two smooth analytic branches obtained by combining
\(
z=x^2
\)
with
\(
y=\pm x\sqrt{1+x}.
\)
Hence the parabolic regularity required in Theorem~\ref{thm:SOSC} can again be
verified branchwise. Therefore Theorem~\ref{thm:SOSC} applies, and $0$ is a
strict local minimizer of $f$ on $X$ with quadratic growth.
\end{example}

\begin{example}[A warning example]
\label{ex:opt-warning}
Consider the analytic hypersurface
\[
X=\{(x,y,z)\in\mathbb R^3:\ z^2-x^3y^3=0\},
\qquad
u=(1,-1,0)\in T_0X.
\]
As shown in Section \ref{subsec:difference},
\[
T_0X=\{(a,b,0)\in\mathbb R^3:\ a,b\in\mathbb R\},
\quad
T^2_{0,u}X=\emptyset,
\quad
T^{2,a}_{0,u}X=\mathbb R^3.
\]
Now consider the analytic objective function
\[
f(x,y,z)=x^2+y^2+z.
\]
Then
\[
\nabla f(0)=(0,0,1),\qquad
\nabla^2 f(0)=
\begin{pmatrix}
2&0&0\\
0&2&0\\
0&0&0
\end{pmatrix}.
\]
We claim that $0$ is a local minimizer of $f$ on $X$. Indeed, if
$(x,y,z)\in X$, then
\(
z^2=x^3y^3.
\)
Hence $xy\ge 0$, and therefore
\(
|z|=(xy)^{3/2}.
\)
By the arithmetic--geometric mean inequality,
\(
xy\le \frac{x^2+y^2}{2},
\)
so
\(
|z|\le \left(\frac{x^2+y^2}{2}\right)^{3/2}.
\)
Thus
\[
f(x,y,z)=x^2+y^2+z
\ge
x^2+y^2-|z|
\ge
x^2+y^2-\left(\frac{x^2+y^2}{2}\right)^{3/2}.
\]
If we set \(r^2=x^2+y^2\), then
\[
f(x,y,z)\ge r^2-\frac{1}{2^{3/2}}r^3
= r^2\left(1-\frac{r}{2^{3/2}}\right)\ge 0
\]
for all feasible points sufficiently close to $0$. Hence $0$ is a local
minimizer of $f$ on $X$.

However, if one incorrectly replaces the geometric second-order tangent set
$T^2_{0,u}X$ by the algebraic set $T^{2,a}_{0,u}X=\mathbb R^3$, then the
second-order expression becomes
\[
\langle u,\nabla^2 f(0)u\rangle+\langle \nabla f(0),w\rangle
=
4+w_3.
\]
Taking the infimum over \(w\in T^{2,a}_{0,u}X=\mathbb R^3\), one gets
\[
\inf_{w\in T^{2,a}_{0,u}X}
\Bigl(
\langle u,\nabla^2 f(0)u\rangle+\langle \nabla f(0),w\rangle
\Bigr)
=
\inf_{w_3\in\mathbb R}(4+w_3)
=
-\infty.
\]
Thus the algebraic replacement would falsely suggest failure of the
second-order necessary condition, even though $0$ is a local minimizer.

This example shows the inconvenience that occurs when
\[
T^2_{0,u}X\neq T^{2,a}_{0,u}X.
\]
The algebraic set may contain many second-order directions that are not
geometrically realizable, and using it in place of $T^2_{0,u}X$ may destroy the
correctness of the optimality conditions. Therefore, the equality
\[
T^2_{0,u}X=T^{2,a}_{0,u}X
\]
established in Section \ref{sec:equality} is essential if one wants to check second-order
optimality conditions algebraically.
\end{example}

%%%%%%%%%%%%%%%%%%%%%%%%%%%%%%%%%%%%%%%%%%%
\section*{Acknowledgement}
This research  is funded  by Vietnam Ministry of Education and Training (MOET) under grant number B2025-CTT-04.

\end{document}